\author[D. Khosla]{Deepak Khosla}
\address{Department of Mathematics \\  University of Texas at Austin \\
  1 University   Station C1200 \\ Austin, Texas 78712 \\ USA}
\email{dkhosla@math.utexas.edu}
\date{\today}
\newtheorem{thm}{Theorem}[section]
\newtheorem{lemma}[thm]{Lemma}
\newtheorem{cor}[thm]{Corollary}
\newtheorem{prop}[thm]{Proposition}
\theoremstyle{definition}
\newtheorem{defn}[thm]{Definition}
\newtheorem{rmk}[thm]{Remark}
\newcommand{\oh}[1]{\mathcal{O}_{#1}}
\DeclareMathOperator{\Sym}{Sym}
\newcommand{\tensor}{\otimes}
\newcommand{\caniso}{\simeq}
\newcommand{\isom}{\cong}
\newcommand{\ds}{\oplus}
\newcommand{\Ds}{\bigoplus}
\newcommand{\dual}[1]{{#1}^{\vee}}
\newcommand{\coh}[2]{H^{#1}(#2)}
\newcommand{\incl}{\hookrightarrow}
\newcommand{\union}{\cup}
\newcommand{\Union}{\bigcup}
\newcommand{\Z}{\mathbf{Z}}
\newcommand{\Q}{\mathbf{Q}}
\newcommand{\C}{\mathbf{C}}
\newcommand{\Proj}{\mathbf{P}}
\newcommand{\Grass}{\mathbf{G}}
\newcommand{\hilb}[1]{\Hilb_{\Z}^{#1}\Proj^2}
\newcommand{\sch}[1]{\sigma_{#1}}
\newcommand{\ls}[1]{\lvert{#1}\rvert}
\DeclareMathOperator{\Pic}{Pic}
\DeclareMathOperator{\Bl}{Bl}
\DeclareMathOperator{\Hilb}{Hilb}
\DeclareMathOperator{\Spec}{Spec}
\DeclareMathOperator{\ch}{ch}
\DeclareMathOperator{\td}{td}
\newcommand{\sC}{\mathcal{C}}
\newcommand{\sL}{\mathcal{L}}
\newcommand{\sV}{\mathcal{V}}
\newcommand{\g}[2]{\mathfrak{g}_{#2}^{#1}}
\newcommand{\grd}{\g{r}{d}}
\newcommand{\Grd}{G^r_d}
\newcommand{\sG}[2]{\mathcal{G}^{#1}_{#2}}
\newcommand{\sgrd}{\mathcal{G}^r_d}
\newcommand{\scrd}{\mathcal{C}^r_d}
\newcommand{\gsix}{\mathcal{G}^6_{24}}
\newcommand{\gsixm}{\gsix}
\newcommand{\Mgb}{\overline{\mathcal{M}}_g}
\newcommand{\Mgn}{\mathcal{M}_{g,n}}
\newcommand{\Mgnb}{\overline{\mathcal{M}}_{g,n}}
\newcommand{\Mgob}{\overline{\mathcal{M}}_{g,1}}
\newcommand{\Mgnt}{\widetilde{\mathcal{M}}_{g,n}}
\newcommand{\Mg}{\mathcal{M}_g}
\newcommand{\Mgt}{\widetilde{\mathcal{M}}_{g}}
\newcommand{\Mgot}{\widetilde{\mathcal{M}}_{g,1}}
\newcommand{\Mttl}{\widetilde{\mathcal{M}}_{2,1}}
\newcommand{\Mogb}{\overline{\mathcal{M}}_{0,g}}
\newcommand{\mtl}{\widetilde{\mathcal{M}}}
\newcommand{\m}{\mathcal{M}}
\newcommand{\M}{\mathcal{M}}
\newcommand{\Mb}{\overline{\mathcal{M}}}
\newcommand{\mirr}{\m_{21}^{\mathrm{irr}}}
\newcommand{\Mgi}{\m_g^{\mathrm{irr}}}
\newcommand{\Mgoi}{\m_{g,1}^{\mathrm{irr}}}
\newcommand{\Ctl}{\widetilde{\mathcal{C}}_{g,1}}
\newcommand{\cb}{\overline{\mathcal{C}}}
\newcommand{\ac}{\alpha}
\newcommand{\bc}{\beta}
\newcommand{\cc}{\gamma}
\newcommand{\sg}{\sigma}
\newcommand{\Sg}{\Sigma}
\newcommand{\Sgt}{\overline{\Sigma}}
\newcommand{\lam}{\lambda}
\newcommand{\de}{\delta}
\newcommand{\De}{\Delta}
\newcommand{\dn}{\delta_0}
\newcommand{\di}{\delta_i}
\newcommand{\eps}{\epsilon}
\newcommand{\epsi}{\epsilon_i}
\newcommand{\can}{\omega}
\newcommand{\tE}{\widetilde{E}}
\newcommand{\mess}{\xi}
\newcommand{\pf}{\eta_*}
\newcommand{\pls}{\pi_*}
\newcommand{\SchS}{(\mathrm{Sch}/S)}
\newcommand{\Sets}{(\mathrm{Sets})}
\newcommand{\syrd}{\mathcal{Y}^r_d}
\newcommand{\shred}{\mathcal{H}^{r,e}_d}
\title[A push-forward formula when $\rho=0$]
{Tautological classes on moduli spaces of curves with linear
  series and a push-forward formula when $\rho=0$}
\begin{document}

\begin{abstract}
  We define tautological Chow classes on the moduli space $\sgrd$ of
  triples consisting of a curve $C$, a line bundle $L$ on $C$ of
  degree $d$, and a linear system $V$ on $L$ of dimension $r$. In the
  case where the forgetful morphism to $\Mgb$ has relative dimension
  zero, we describe the images of these classes in $A^1(\Mgb)$.
  As an application, we compute the (virtual) slopes of several
  different classes of divisors on $\Mgb$.
\end{abstract}

\maketitle{}

\tableofcontents{}

\section{Introduction}
\label{sec:introduction}

The cone of effective divisors on a projective variety plays an
important r\^ole in the understanding of its birational geometry. In
the case of the moduli space $\Mg$ of genus-$g$ curves, it has become
apparent that the most interesting effective divisor classes are those
that arise from the \emph{extrinsic} geometry of curves in projective
space. Indeed, in their pioneering work, Harris and Mumford
\cite{HarMum} considered divisors of curves admitting a degree-$d$
branched cover of $\Proj^1$, where $d=(g+1)/2$. More recently, work of
Cukierman \cite{cukierman}, Farkas-Popa \cite{FarkasPopa}, Khosla
\cite{thesis.old}, and Farkas \cite{Farkas.Syzygies} \cite{Farkas.Koszul} has
found effective divisor classes of smaller slope that those considered
by Mumford and Harris, and some of these classes have been used to
improve on the best known bounds on $n$ for which $\Mgn$ is of general
type for fixed $g$ \cite{Farkas.Koszul}.

Although all of these divisors are described by conditions on the space
of embeddings of a curve in projective space, the techniques used to
deal with them have been varied. In this paper, we put all of the
above calculations into a unified framework and lay the ground for
future work on the effective cone of $\Mgb$. Specifically we consider
the moduli stack $\sgrd(\Mg)$ of genus-$g$ curves together with a
$\grd$ (linear series). The set of $\C$-valued points consists of
triples $(C,L,V)$, where $C$ is a genus-$g$ curve, $L$ is a degree-$d$
line bundle on $C$, and $V\subset\coh{0}{L}$ is an $(r+1)$-dimensional
subspace. The forgetful morphism
\begin{equation*}
  \eta\colon\sgrd(\Mg) \to \Mg
\end{equation*}
is representable, proper, and generically smooth of
relative dimension
\begin{equation*}
  \rho(g,r,d) = g - (r+1)(g-d+r)
\end{equation*}
\cite{Kleiman.Laksov}, \cite{GriffithsHarrisBN}, \cite{EHcusp},
\cite{EH.Brill-Noether} \cite{Gieseker}, \cite{EHPetri},
\cite{Lazarsfeld}, \cite{Fulton.Lazarsfeld}. (When $\rho<0$, then
$\eta$ is not dominant.)

If $g$, $r$, and $d$ are chosen so that $\rho=-1$, then the image of
$\eta$ has a component of codimension $1$, and it is this divisor that
Eisenbud and Harris use to show that $\Mg$ is of general type when
$g\ge24$ and $g+1$ is composite \cite{EH.Kodaira}. The closure of this
divisor in the moduli space $\Mgi$ of irreducible nodal curves may
also be interpreted as the image of the virtual fundamental class of
$\sgrd(\Mgi)$ under the proper push-forward morphism
\begin{equation*}
  \eta_* \colon A_*(\sgrd(\Mgi)) \to A_*(\Mgi),
\end{equation*}
where $\sgrd(\Mgi)$ is a partial compactification of $\sgrd(\Mg)$
using torsion-free sheaves.

If we now choose $g$, $r$, and $d$ so that $\rho=0$, we are led to the
``second generation'' of effective divisors on $\Mg$. In this case,
the morphism $\eta\colon\sgrd(\Mg) \to \Mg$ is generically finite.
Since the work of Cukierman \cite{cukierman}, every interesting
effective divisor class on $\Mgb$ has been realized as the image under
$\eta$ of a divisor on $\sgrd(\Mg)$. For example, the K3 locus in
$\M_{10}$ \cite{cukierman}, which was the first counterexample
\cite{FarkasPopa} to the Harris-Morrison slope conjecture
\cite{HarMor}, can be interpreted as the image under $\eta$ of the
divisor in $\sG{4}{12}(\M_{10})$ of $\g{4}{12}$'s which do not lie on
a quadric.  Again, the class of its closure in
$\M_{10}^{\mathrm{irr}}$ may be realized as the proper push-forward of
the corresponding class in $\sG{4}{12}(\M_{10}^{\mathrm{irr}})$ under
the morphism
\begin{equation*}
  \eta_*\colon A_*(\sG{4}{12}(\M_{10}^{\mathrm{irr}})) \to A_*(\M_{10}^{\mathrm{irr}}).
\end{equation*}
This latter class, in turn, is easily computed to be
\begin{equation*}
  2\ac - \bc - 6\cc + \eta^*\lam,
\end{equation*}
where $\ac$, $\bc$, and $\cc$ are certain tautological classes
defined on $\sgrd(\Mgi)$. (See Sections~\ref{sec:tautological-classes}
and \ref{sec:syzygy-divisors}.)

In Section~\ref{sec:statement-theorem}, we introduce a partial
compactification $\sgrd(\Mgt)$ of $\sgrd(\Mg)$, which is proper over
an open substack $\Mgt$ of $\Mgb$ that contains $\Mg$ and whose
complement in $\Mgb$ has codimension 2. We define tautological virtual
codimension-1 Chow classes $\ac$, $\bc$, and $\cc$ on $\sgrd(\Mgt)$
and, in the case where $\rho=0$, compute their images under the proper
push-forward morphism
\begin{equation*}
  \eta_*\colon A_{3g-2}(\sgrd(\Mgt)) \to A_{3g-2}(\Mgt) = A^1(\Mgb).
\end{equation*}
This allows one to completely mechanically compute the slopes of all
of the divisor classes on $\Mgb$ that have thus far been studied. As
examples, in Section~\ref{sec:applications} we study syzygy divisors,
hypersurface divisors, Gieseker-Petri divisors, and secant plane
divisors.

In Section~\ref{sec:spec-famil-curv} we give the statements
of a series of calculuations over special families of stable curves.
These calculations assemble to give the main result. Finally
Section~\ref{sec:proofs-lemmas} is devoted to the proofs of the lemmas
stated in Section~\ref{sec:spec-famil-curv}.

This work was carried out for my doctoral thesis under the supervision
of Joe Harris. I would like to thank Ethan Cotterill, Gavril Farkas,
Johan de Jong, Martin Olsson, Brian Osserman, and Jason Starr for
helpful conversations.

\section{Statement of Theorem}
\label{sec:statement-theorem}

\subsection{A limit linear series moduli stack}
\label{sec:limit-linear-series}

\begin{defn}[\cite{Knudsen}]
  Let $S$ be any scheme, and let $g$ and $n$ be non-negative
  integers. An \emph{$n$-pointed stable curve of genus $g$ over $S$}
  is a proper flat morphism $\pi\colon X\to S$ together with sections
  $\sg_1\dotsc,\sg_n\colon S\to X$. Each geometric fiber $X_{\bar{s}}$
  must be a reduced, connected, $1$-dimensional scheme such that
  \begin{enumerate}
  \item $X_{\bar{s}}$ has only ordinary double points;
  \item $X_{\bar{s}}$ intersects the sections $\sg_1\dotsc,\sg_n$
    at distinct points $p_1,\dotsc p_n$ that lie on the smooth locus
    of $X_{\bar{s}}$;
  \item the line bundle $\can_{X_{\bar{s}}}(p_1+\dotsb+p_n)$ is ample;
  \item $\dim \coh{1}{\oh{X_{\bar{s}}}} = g$.
  \end{enumerate}
\end{defn}

\begin{thm}[\cite{DelMum},\cite{Knudsen}]
  \label{thm:Mgnb}
  Let $g$ and $n$ be non-negative integers such that $2g-2+n>0$.  The
  category $\Mgnb$ of families of $n$-pointed stable curves of genus
  $g$ is an irreducible Deligne-Mumford stack that is proper, smooth,
  and of finite type over $\Spec\Z$. % The full sub-category of smooth
%   families is an open substack $\Mgn$, whose complement is a divisor
%   in $\Mgnb$ with normal crossings relative to $\Spec\Z$.
\end{thm}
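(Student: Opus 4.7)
The plan is to verify each property in sequence, ultimately reducing the deepest geometric input to the stable reduction theorem of Deligne--Mumford. First, I would check the stack axioms in the \'etale topology: isomorphisms between families of stable curves form a sheaf and objects satisfy effective descent, both standard once one uses pluricanonical embeddings to present stable curves as closed subschemes of projective space, via condition (iii) in the definition. To simultaneously establish the Deligne--Mumford property and finite type, I would build a scheme-theoretic atlas via a Hilbert scheme: the line bundle $\can_{X/S}^{\tensor 3}(3\sg_1+\cdots+3\sg_n)$ is relatively very ample with Hilbert polynomial $P(t)$ depending only on $g$ and $n$, yielding an identification $\Mgnb \caniso [H/\mathrm{PGL}_{N+1}]$, where $H$ is a locally closed subscheme of $\Hilb_{P(t)}(\Proj^N)$ parametrizing tricanonically embedded $n$-pointed stable curves. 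The Deligne--Mumford property then reduces to finiteness and reducedness of each geometric automorphism group scheme: finiteness follows because ampleness of $\can_C(p_1+\cdots+p_n)$ bounds the action on pluricanonical sections, and reducedness is equivalent to vanishing of global vector fields on $C$ vanishing at the $p_i$, a direct local calculation at smooth and nodal points.

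For properness I would apply the valuative criterion. Separatedness, the uniqueness of extensions over a trait, follows from minimality of the stable model: two extensions agreeing on the generic fiber admit a birational identification whose exceptional locus on the special fiber would consist of unstable components, forcing coincidence. Existence of extensions is the Deligne--Mumford stable reduction theorem: any family of smooth curves of genus $g\ge 2$ over a punctured trait extends, after finite base change, to a family of stable curves. This is the essential geometric input and the main obstacle in the entire argument; it is proven by first achieving semistable reduction through resolution of singularities (combined with toric/combinatorial manipulations) and then contracting chains of unstable rational $(-2)$-curves to obtain the stable model.

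Smoothness follows from deformation theory: first-order deformations of $(C,p_1,\ldots,p_n)$ are classified by $\mathrm{Ext}^1$ of the cotangent complex twisted by the sections, while obstructions lie in the corresponding $\mathrm{Ext}^2$. The latter vanishes for a nodal curve with marked points in the smooth locus, via a local calculation at the nodes combined with the local-to-global spectral sequence, so deformations are unobstructed. Finally, for irreducibility one reduces via Knudsen's forgetful morphism $\Mgnb \to \Mgb$, which is proper with geometrically connected fibers (the fibers being iterated universal curves), to the case $n=0$. Irreducibility of $\Mgb$ over $\C$ is classical via the connectedness of Teichm\"uller space, and it propagates to all geometric fibers of $\Mgnb \to \Spec\Z$ by the constancy of fiber geometry in a proper flat morphism whose generic fiber is geometrically irreducible.
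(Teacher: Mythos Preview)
The paper does not supply its own proof of this statement: it is quoted as a foundational result with citations to Deligne--Mumford and Knudsen, and no argument is given. Your outline is an accurate summary of the proofs in those references---the Hilbert-scheme atlas via the log-tricanonical embedding, finiteness and unramifiedness of automorphism schemes for the Deligne--Mumford property, the valuative criterion reduced to stable reduction, unobstructedness from vanishing of the relevant $\mathrm{Ext}^2$, and irreducibility by reducing to $n=0$ via Knudsen's contraction and invoking the transcendental connectedness over $\C$.

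One small point worth tightening: the final step, propagating irreducibility from the complex fiber to all geometric fibers of $\Mgnb\to\Spec\Z$, is not quite ``constancy of fiber geometry in a proper flat morphism.'' What is actually used is that for a \emph{smooth} proper morphism the Stein factorization is finite \'etale, so the number of geometric connected components of the fibers is locally constant on the base; smoothness then converts connectedness into irreducibility. Flatness alone would not suffice. With that adjustment your sketch matches the literature.
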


\begin{defn}[\cite{DelMum}]
  Let $k$ be an algebraically closed field, and let
  %$(X,p_1,\dotsc,p_n)$
  $X$ be an $n$-pointed stable curve of over $k$. The
  \emph{dual graph $\Gamma_X$ of $X$} is the following unoriented graph:
  \begin{enumerate}
  \item the set of vertices of $\Gamma_X$ is the set $\Gamma_X^0$ of
    irreducible components of $X$;
  \item the set of edges of $\Gamma_X$ is the set $\Gamma_X^1$ which
    is the union of the singular and marked points of $X$;
  \item an edge $x\in\Gamma^1_X$ has for extremities the irreducible
    components on which $x$ lies;
%   \item each vertex is labeled by its arithmetic genus $p_a$, and
%     each open edge is labeled by its marking number.
  \end{enumerate}
\end{defn}

\begin{defn}
  An $n$-pointed stable curve $X\to S$ is \emph{tree-like}
  if the dual graph of each geometric fiber is a tree.
\end{defn}

\begin{prop}[\cite{limit.linear.series}]
  The category $\Mgnt$ of families of tree-like curves is an open substack of
  $\Mgnb$, whose complement has codimension 2.
\end{prop}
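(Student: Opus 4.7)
The plan is to show $\Mgnb \setminus \Mgnt$ is a closed substack of codimension $2$, which splits into openness and a codimension bound.

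For openness I would use that tree-likeness depends only on the dual graph of the geometric fiber, and is therefore constant on each stratum of the stratification of $\Mgnb$ by topological type. To check closedness of the complement I would verify that non-tree-likeness is preserved under specialization. From the versal local picture $k[[x,y,t]]/(xy-t)$ at each node, a specialization $X' \rightsquigarrow X$ of stable curves modifies the dual graph by a sequence of elementary moves, each of which either inserts an edge between two existing vertices (possibly equal) or splits an existing vertex into two vertices joined by a new edge; neither move can destroy a pre-existing cycle, so if $\Gamma_{X'}$ is not tree-like then neither is $\Gamma_X$.

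For the codimension bound I would inspect the generic points of the codimension-one boundary strata of $\Mgnb$. The generic point of $\delta_0$ is an irreducible curve with one non-separating node, with dual graph a single vertex carrying one self-loop; the generic point of each $\delta_i$ for $i \ge 1$ is a curve with two smooth components meeting at a single separating node, with dual graph two vertices joined by one edge. Both are tree-like under the definition in force (the self-loop at $\delta_0$ notwithstanding), so no codimension-one stratum lies in the complement of $\Mgnt$. Hence the complement has codimension at least $2$, with equality attained on the codimension-two ``theta'' stratum of curves consisting of two components meeting at two distinct nodes, whose dual graph contains a genuine two-cycle.

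The only substantive input is the enumeration of elementary specialization moves on dual graphs used in the openness step, which follows from the standard local deformation theory of ordinary double points on stable curves; the rest of the argument is a direct combinatorial check on the dual graph.
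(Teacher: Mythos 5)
Your argument is correct in substance, but note that it takes a different route from the paper, which gives no proof at all: the proposition is simply quoted from the cited work of Osserman and the author, so your direct argument via the stratification of $\Mgnb$ by topological type is a self-contained substitute. Both steps are sound. For openness: the dual graph of the generic fibre of a one-parameter degeneration is obtained from that of the special fibre by contracting the edges of the smoothed nodes, so the special graph arises from the generic one by adding loops and splitting vertices, and neither move can destroy a cycle through two distinct vertices; since the non-tree-like locus is a finite union of locally closed strata, closure under specialization gives closedness. (Your extra move of inserting an edge between two \emph{distinct} pre-existing vertices never actually occurs---contracting such an edge would merge the two vertices---but listing it is harmless, as it too preserves cycles; marked points only contribute legs and are irrelevant.) The codimension count is also right: the generic points of $\delta_0$ and of each $\delta_i$ are tree-like, and codimension two is attained on curves with two components meeting at two nodes. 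The one point you should make explicit rather than parenthetical is the convention on self-loops: under the literal definition of tree-like recorded in this paper (dual graph a tree, with every node an edge), the generic point of $\delta_0$ would fail to be tree-like and the complement would have codimension one, contradicting the statement; the definition in force in the cited reference, and the one your proof needs, is that the dual graph become a tree after deleting loops, i.e. every node lying on two distinct components is separating. With that convention stated, your proof is complete.
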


\begin{defn}
  Let $\pi\colon X \to S$ be a smooth genus-$g$ curve. A $\grd$ on $X$
  is the data of a line bundle $L\to X$ of relative degree $d$
  together with a rank-$r$ vector subbundle $V$ of $\pls L$
  \cite[Definition 4.2]{Osserman}.
\end{defn}

\begin{prop}
  Let $\pi\colon X \to S$ be a smooth genus-$g$ curve. The \'etale
  sheafification of the functor
  \begin{equation*}
    \SchS \to \Sets
  \end{equation*}
  given by
  \begin{equation*}
    T \mapsto \{\text{\emph{$\grd$'s on $X_T\to T$}}\}
  \end{equation*}
  is represented by a scheme $\Grd(X/S)$, proper over $S$. If $Z$ is
  an irreducible component of $\Grd(X/S)$, then
  \begin{equation}
    \label{eq:1}
    \dim Z \ge \dim S + \rho(g,r,d)
  \end{equation}
\end{prop}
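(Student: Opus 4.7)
The plan is to construct $\Grd(X/S)$ étale-locally as a closed subscheme of a Grassmannian bundle over the relative Picard scheme, cut out by a determinantal condition, and then to deduce the inequality \eqref{eq:1} from the standard codimension estimate for vanishing loci of sections of vector bundles. This is the classical Kempf/Kleiman--Laksov construction.

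First I would form $P := \Pic^d(X/S)$, which for $\pi\colon X \to S$ smooth proper is smooth and proper over $S$ of relative dimension $g$. After passing to an étale cover of $P$ (harmless because we are sheafifying), choose a Poincaré line bundle $\sL$ on $\fp{X}{S}{P}$. Fix an effective Cartier divisor $D \subset X$, flat of relative degree $e$ over $S$, with $e$ chosen large enough so that $R^1\pi_*(\sL(D)) = 0$ by cohomology and base change. Then $\mathcal{E} := \pi_* \sL(D)$ and $\mathcal{F} := \pi_*(\sL(D)|_D)$ are vector bundles on $P$ of ranks $d + e - g + 1$ and $e$ respectively, and $\pi_* \sL$ is realized uniformly as the kernel of the natural map $\phi\colon \mathcal{E} \to \mathcal{F}$.

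Next I would form the Grassmannian bundle $\pi_G\colon \Grass(r+1, \mathcal{E}) \to P$ with tautological rank-$(r+1)$ subbundle $\sV \subset \pi_G^* \mathcal{E}$. The scheme $\Grd(X/S)$ is then cut out as the zero locus of the composition $\sV \incl \pi_G^* \mathcal{E} \to \pi_G^* \mathcal{F}$, viewed as a global section of the rank-$(r+1)e$ bundle $\dual{\sV} \tensor \pi_G^* \mathcal{F}$. These étale-local constructions glue to give the desired scheme representing the étale sheafification of the functor. Properness over $S$ follows because $P/S$ is proper, $\Grass(r+1, \mathcal{E}) \to P$ is proper, and $\Grd(X/S)$ is closed in it.

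For the dimension bound, observe that $\Grass(r+1,\mathcal{E})$ has relative dimension $g + (r+1)(d + e - g - r)$ over $S$. Since $\Grd(X/S)$ is locally the zero scheme of a section of a rank-$(r+1)e$ bundle, every irreducible component has codimension at most $(r+1)e$ there, giving
\begin{equation*}
  \dim Z \ge \dim S + g + (r+1)(d + e - g - r) - (r+1)e = \dim S + g - (r+1)(g-d+r) = \dim S + \rho(g,r,d).
\end{equation*}
The main obstacle I anticipate is not the dimension estimate itself but rather handling the possible absence of a global Poincaré line bundle on $P$; this is circumvented by making the construction étale-locally and then invoking the étale sheafification built into the statement of the functor.
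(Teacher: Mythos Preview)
Your argument is the standard Kempf/Kleiman--Laksov construction and is correct. The dimension count is accurate, and you correctly identify the one genuine subtlety (the Poincar\'e bundle need not exist globally) and address it via the \'etale sheafification. One small point you gloss over: the auxiliary divisor $D\subset X$ flat of degree $e$ over $S$ also need not exist globally, so you should pass to an \'etale cover of $S$ (not just of $P$) to arrange both the Poincar\'e bundle and $D$; this is harmless for the same reason you give.

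As for comparison: the paper does not prove this proposition at all. It is stated as background, with the definition of a $\grd$ attributed to \cite[Definition 4.2]{Osserman} and the construction implicitly deferred to the literature (Kleiman--Laksov, Fulton--Lazarsfeld, and the companion paper \cite{limit.linear.series} for the extension to tree-like curves). So there is nothing in the paper to compare against; your proof supplies exactly the classical argument the author is taking for granted.
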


In this way, one can construct a Deligne-Mumford stack $\sgrd$,
representable and proper over $\Mg$.  In \cite{limit.linear.series},
Osserman and the author extend this construction to families of
tree-like stable curves. That is, for $X$ a tree-like stable curve
over $S$ of genus $g$, they construct an algebraic space $\Grd(X/S)$,
proper over $S$ and satisfying the same dimension lower
bound~(\ref{eq:1}). In addition, they prove that if $S=\Spec k$, where
$k$ is an algebraically closed field, then there is an open substack
of $\Grd(X/k)$ isomorphic to the space of refined limit linear series
on $X$.
In this way they construct a representable and proper Deligne-Mumford
stack $\sgrd$ over $\Mgt$ and hence over $\Mgnt$ by pulling back.

\subsection{Tautological Classes}
\label{sec:tautological-classes}

\begin{defn}
  Let $g$ and $n$ be as in Theorem~\ref{thm:Mgnb} and let $r$ and $d$
  be non-negative integers for which $\rho(g,r,d)=0$. Let
  $U\subset\Mgnt$ be the open substack over which
  $\eta\colon\sgrd\to\Mgnt$ is flat.
  % Then $\eta^{-1}(U)$ is  irreducible REF.
  Let $[\sgrd]\in A_{3g-3+n+\rho}(\sgrd)$ be the
  class of the closure of $\eta^{-1}(U)$. Similarly, let $[\scrd]\in
  A_{3g-2+n+\rho}(\scrd)$ be the class of the closure of
  $(\eta\circ\pi)^{-1}(U)$ in the universal curve $\pi\colon\scrd\to\sgrd$.
\end{defn}

In the following we work over $\Mgot$ in order to be able to
consistently define the universal line and vector bundles. There is a
universal pointed quasi-stable curve $\syrd\to\sgrd$ whose
stabilization is the universal stable curve $\scrd\to\sgrd$. Let
$\sg\colon\sgrd\to\syrd$ be the marked section. There is a universal
line bundle $\sL\to\syrd$ of relative degree $d$ together with a
trivialization $\sg^*\sL\isom\oh{\sgrd}$. On each geometric fiber,
$\sL$ has degree 1 on every exceptional curve, degree $d$ on the
pre-image of the component of stable curve containing the marked
point, and degree 0 on the pre-image of all other components of the
stable curve. There is a sub-bundle
\begin{equation*}
  \sV \incl \pls\sL
\end{equation*}
which, over each point in $\sgrd$, is equal to the aspect of the limit
linear $\grd$ on the component containing the marked point.

\begin{rmk}
  \label{rmk:pic-Mg1}
  By a theorem of Harer \cite{Harer}, for $g\ge3$,
  \begin{equation*}
    A_{3g-3}(\Mgot)_\Q = \Pic\Mgob\tensor\Q =
    \Q\lam\ds\Q\dn\ds\Q\de_1\ds\dotsb\ds\Q\de_{g-1}\ds\Q\psi
  \end{equation*}
  where $\lam$ and $\psi$ are the first Chern classes of the Hodge and
  tautological bundles respectively, $\dn$ is
  the divisor of irreducible nodal curves, and $\di$ is the divisor of
  unions of curves of genus $i$ and $g-i$, where the marked point lies
  on the component of genus $i$. 
\end{rmk}

\begin{defn}
  We define ``codimension-1'' cycle classes in $A_{3g-3+\rho}(\sgrd)$
  as follows.
  \begin{align*}
    \ac &= \pls \bigl(c_1(\sL)^2 \cap [\syrd]\bigr) \\
    \bc &= \pls \bigl(c_1(\sL)\cdot c_1(\omega)\cap[\syrd]\bigr) \\
    \cc &= c_1(\sV)\cap[\sgrd]\text{.}
  \end{align*}
\end{defn}

\subsection{A Push-Forward Formula}
\label{sec:push-forward-formula}

We now state our main result.

\begin{thm}\label{main-thm}
  Let $g \ge 1$, and $r,d \ge 0$, be integers for which
  %\note{there are other restrictions on $g,r,d$.}
  \begin{equation*}
    \rho = g - (r+1)(g-d+r) = 0,
  \end{equation*}
  and consider the map
  \begin{equation*}
    \eta\colon \sgrd  \to \Mgot
  \end{equation*}
  If
  \begin{equation*}
    \pf \colon A_{3g-3}(\sgrd) \to A_{3g-3}(\Mgot)
  \end{equation*}
  is the proper push-forward morphism on corresponding Chow groups,
  then
  \begin{equation*}
    \begin{split}
      \frac{6(g-1)(g-2)}{dN}\pf\ac& = 
      6(gd - 2g^2 + 8d - 8g + 4) \lam \\
      & \quad + (2g^2 - gd + 3g - 4d - 2) \dn \\
      & \quad + 6\sum_{i=1}^{g-1} (g-i)(gd + 2ig - 2id - 2d) \di \\
      & \quad - 6d(g-2) \psi \text{,}
    \end{split}    
  \end{equation*}
  \begin{equation*}
    \begin{split}
      \frac{2(g-1)}{dN}\pf\bc
       = 12\lam - \dn 
        + 4 \sum_{i=1}^{g-1} (g-i)(g-i-1) \di  -  2(g-1)\psi\text{,}
    \end{split}
  \end{equation*}

  \begin{equation*}
    \begin{split}
      \frac{2(g-1)(g-2)}{N}\pf\cc
      & = \bigl[-(g+3)\mess + 5r(r+2)\bigr]\lam
      - d(r+1)(g-2)\psi \\
      & \quad + \frac{1}{6}\bigl[(g+1)\mess - 3r(r+2)\bigr]\dn \\
      & \quad + \sum_{i=1}^{g-1} (g-i) \bigl[i\mess +
      (g-i-2)r(r+2)\bigr]\di \text{,}
    \end{split}
  \end{equation*}
  where
  \begin{equation*}
    N = \frac{1!\cdot 2!\cdot 3! \cdots r!\cdot g!}
    {(g-d+r)!(g-d+r+1)!\cdots(g-d+2r)!}
  \end{equation*}
  and
  \begin{equation*}
    \mess = 3(g-1) + \frac{(r-1)(g+r+1)(3g-2d+r-3)}{g-d+2r+1} \text{.}
  \end{equation*}

\end{thm}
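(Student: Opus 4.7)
The plan is to reduce the formula to a finite set of intersection numbers by testing against a basis of one-parameter families in $\Mgot$. By Remark~\ref{rmk:pic-Mg1}, the group $A^1(\Mgot)_\Q$ has rank $g+2$ with basis $\lam, \dn, \de_1, \ldots, \de_{g-1}, \psi$; hence each of $\pf\ac$, $\pf\bc$, $\pf\cc$ is determined by its pairings with any $g+2$ curves $B \to \Mgot$ whose intersection matrix against this basis is non-singular. By the projection formula, $B \cdot \pf\ac = \eta^{-1}(B) \cdot \ac$ and similarly for $\bc, \cc$, so the problem becomes one of computing these intersection numbers on $\sgrd$ after base change to each test family.

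For the test curves I would take the standard families that isolate the Harer generators: a Lefschetz pencil of curves on a suitable surface, to detect $\lam$ and $\dn$; for each $i=1,\ldots,g-1$, a family obtained by attaching a fixed general curve of genus $g-i$ at a fixed point to a varying pointed curve of genus $i$, with the marked point on the genus-$i$ side, to detect $\de_i$; and a fiber of the forgetful morphism $\Mgot \to \Mgt$, that is, a fixed smooth curve of genus $g$ with the marked point varying over the curve, to detect $\psi$. Classical formulas give the intersection matrix of these families against $\lam, \dn, \de_i, \psi$, and one verifies that it is non-singular.

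On a family of smooth curves $\eta$ is finite flat of degree $N$ (the Castelnuovo--Plücker count of $\grd$'s on a general genus-$g$ curve with $\rho=0$), and $\ac$, $\bc$, $\cc$ can be expressed as push-forwards of Chern-class polynomials in $\sL$ and $\sV$, computable via Grothendieck--Riemann--Roch on $\syrd \to \sgrd$. For families crossing a boundary divisor $\di$ or $\dn$, the fiber of $\sgrd$ over the generic (tree-like) point is the space of refined limit linear $\grd$'s constructed in \cite{limit.linear.series}, and the crucial feature is that the universal line bundle $\sL$ lives on the quasi-stable model $\syrd$ rather than on the stable universal curve: it has degree $1$ on each exceptional component, degree $d$ on the pre-image of the component containing the marked point, and degree $0$ on the pre-image of the remaining components. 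This twisting, together with a sum over the admissible vanishing sequences at the node, is what produces the $(g-i)$-weighted sums over $i$ in the statement.

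The hard part will be these boundary calculations. Over a generic point of a $\de_i$-family the limit linear series stratify by the vanishing sequences of their two aspects at the node, and each stratum contributes to the intersection number with a multiplicity given by a Schubert-type intersection count on a Grassmannian; one must then carefully account for the contributions of the exceptional chain via the various degrees of $\sL$, and, in the $\dn$ case, for monodromy among the branches of $\sgrd$ at the node. This bookkeeping is exactly what the lemmas of Section~\ref{sec:spec-famil-curv} are designed to carry out. Once those are in hand the remaining step is the linear algebra of inverting the test-curve intersection matrix to extract the coefficients displayed in the statement.
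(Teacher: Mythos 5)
Your overall strategy---restrict to special families, use compatibility of push-forward with pull-back, analyze limit linear series there, and invert a linear system---is the same skeleton as the paper's proof, but as written the proposal has two genuine gaps. First, the step ``$B\cdot\pf\ac=\eta^{-1}(B)\cdot\ac$'' is not just the projection formula: $\sgrd$ may be singular and a priori of excess dimension over the boundary, so one must use the refined Gysin homomorphism $j^!$, the compatibility $\eta'_*j^!=j^*\pf$, and crucially the identity $j^![\sgrd]=[j^*\sgrd]$, which rests on the dimension bounds for the limit-linear-series spaces from \cite{limit.linear.series} (this is exactly the point the paper isolates in its proof). Second, and more seriously, the test families you propose are not ones over which the intersection numbers can actually be computed. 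Over a Lefschetz pencil the smooth members are not Brill--Noether--Petri general (indeed, for $\rho=0$ such pencils typically meet the very divisors one is studying), so $\eta^{-1}(B)$ is neither finite nor flat of degree $N$ over the pencil and its cycle structure is unknown; and ``a varying pointed curve of genus $i$'' is not a specified one-parameter family---for a general pencil of genus-$i$ curves the needed computation is essentially the original problem one genus down, a recursion you have not set up. Deferring the boundary analysis to ``the lemmas of Section~\ref{sec:spec-famil-curv}'' assumes precisely the content that constitutes the proof.

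By contrast, the paper's families are engineered so that every limit linear series over them is completely enumerable: over the elliptic-tail flag family $i\colon\Mogb\incl\Mgot$ the Pl\"ucker formula forces all aspects, so $\pf\ac=\pf\bc=\pf\cc=0$ and, combined with the relations among $i^*\di$ on $\Mogb$ (Lemma~\ref{lem:pull-back-mogb}), this pins down $b_1,\dotsc,b_{g-2}$ in terms of $b_{g-1}$ without computing a single Schubert number; the moving-marked-point family $k_h$ gives $c$ and the relations between $b_h$ and $b_{g-h}$ by an elementary degree count using the vanishing sequence identity \eqref{sum.a_i}; and the genus-2 family $j\colon\Mttl\incl\Mgot$, attached to a fixed Brill--Noether-general curve, isolates the remaining unknowns $a$, $b_0$, $b_{g-1}$ because the genus-2 aspects admit only two maximal ramification types, with the Weierstrass locus handled by an explicit degeneration and Lemma~\ref{lem:schubert}. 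If you want to salvage your version, you must either adopt such families or specify test curves (e.g.\ moving an attachment point on a fixed general curve, pencils with completely understood Brill--Noether behavior) for which the fibered space $\eta^{-1}(B)$ and its multiplicities can be controlled, and in each case verify the expected-dimension statement that legitimizes the base-change identity.
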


\section{Applications}
\label{sec:applications}

In this section, we will apply Theorem~\ref{main-thm} to various
classes of divisors on $\sgrd(\Mgi)$, where $g,r,d$ are chosen so that
$\rho(g,r,d)=0$. We can parameterize such choices using integers
$r,s\ge1$ and setting $g=(r+1)(s+1)$ and $d=r(s+2)$.

\subsection{The Gieseker-Petri Divisor}
\label{sec:gies-petri-divis}

Petri's theorem \cite{Gieseker} states that if $C$ is a general curve,
then for all line bundles $L$ on $C$, the natural map
\begin{equation*}
  \coh{0}{L}\tensor\coh{0}{K_C\tensor \dual{L}} \to \coh{0}{K_C}
\end{equation*}
is injective. This implies that if $g$, $r$, and $d$ are chosen so
that $\rho=0$, and $(C,L)$ is a $\grd$ on a general curve $C$, then
the natural map
\begin{equation*}
  V\tensor\coh{0}{K_C\tensor \dual{L}} \to \coh{0}{K_C}
\end{equation*}
is an isomorphism. Away from a subset of codimension greater than 1,
the sheaf $\pls(\can\tensor\dual{\sL})$ on $\sgrd(\Mgoi)$ is locally
free, and the degeneracy locus of the map of vector bundles
\begin{equation*}
  \sV\tensor \pls(\can\tensor\dual{\sL}) \to \pls(\can)
\end{equation*}
defines the \emph{Gieseker-Petri} divisor in $\sgrd$. We compute its
class as follows. By definition,
\begin{equation*}
  c_1(\pls(\can)) = \lam.
\end{equation*}
By Grothendieck-Riemann-Roch,
\begin{equation*}
  \begin{split}
    c_1(\pls\sL) - c_1(R^1\pls\sL)&=
    \pls\bigl[\ch(\sL)\cdot\td_{\scrd/\sgrd}\bigr]_1 \\
    &= \pls\biggl[\Bigl(1+c_1(\sL) + \frac{1}{2}c_1(\sL)^2\Bigr) \\
    &\qquad\qquad\qquad \cdot
    \Bigl(1-\frac{c_1(\omega)}{2} +
    \frac{c_1(\omega)^2+\kappa}{12}\Bigr)\biggr]_1 \\
    &= \frac{\ac}{2} - \frac{\bc}{2} + \lam.
  \end{split}
\end{equation*}
Thus,
\begin{align*}
  c_1\pls(\can\tensor\dual{\sL}) = - c_1(R^1\pls\sL)
  = \frac{\ac}{2} - \frac{\bc}{2} -\cc + \lam.
\end{align*}
It follows that our degeneracy locus in $\sgrd(\Mgoi)$ has class
\begin{equation*}
  \frac{r+1}{2}(-\ac+\bc) + (d+1-g)\cc -r\lam.
\end{equation*}
It is easy to see that the slope of the image divisor in $\Mgoi$ will
be symmetric in $r$ and $s$. Letting $x=(r+1)+(s+1)$ and
$y=(r+1)(s+1)$ and applying Theorem~\ref{main-thm}, we find that the
slope of the Gieseker-Petri divisor in $\Mgi$ is
\begin{equation*}
  \frac {6(2x+7{y}^{2}+7xy+x{y}^{2}+12y+{y}^{3})}{y \left( 4+y
 \right)  \left( y+1+x \right) }.
\end{equation*}

\subsection{Hypersurface Divisors}
\label{sec:hypers-divis}

Another natural substack in $\sgrd$ is the locus of $\grd$s which lie
on a hypersurface of degree $k$; that is, $\grd$'s $(L,V)$ for which
the restriction map
\begin{equation*}
  \Sym^k\sV\to\coh{0}{L^{\tensor k}}
\end{equation*}
has a non-trivial kernel. If $\rho=0$ and the above two vector spaces
have the same dimension, then this defines a virtual divisor in
$\sgrd$. Namely, we look at the degeneracy locus of the map of vector bundles
\begin{equation*}
  \Sym^k\sV \to \pls(\sL^{\tensor k}).
\end{equation*}
Note that if $d>g-1$, then $L^{k}$ is always non-special when $k\ge2$.

To compute the class of the degeneracy locus, observe first that
\begin{equation*}
  c_1(\Sym^k\sV) = \binom{r+k}{k-1}\cc.
\end{equation*}
By Grothendieck-Riemann-Roch,
\begin{equation*}
  \begin{split}
    c_1 (\pls\sL^{\tensor k})&=
    \pls\bigl[\ch(\sL^{\tensor k})\cdot\td_{\scrd/\sgrd}\bigr]_1 \\
    &= \pls\biggl[\Bigl(1+kc_1(\sL) + \frac{k^2}{2}c_1(\sL)^2\Bigr)\biggr. \\
    & \qquad \qquad \qquad \left.\cdot \Bigl(1-\frac{c_1(\omega)}{2} +
      \frac{c_1(\omega)^2+\kappa}{12}\Bigr)\right]_1 \\
    &= \frac{k^2}{2}\ac - \frac{k}{2}\bc + \lam.
  \end{split}
\end{equation*}
Applying Theorem~\ref{main-thm}, the image divisor in $\Mgi$ has slope
\begin{equation*}
  \frac{f(k,r,s)}{g(k,r,s)},
\end{equation*}
where $f$ and $g$ are (rather large) polynomials in $k$, $r$, and $s$,
which are, in turn, related by the identity
\begin{equation*}
  \binom{r+k}{k} = kr(s+2) - (r+1)(s+1) + 1.
\end{equation*}

\subsection{Syzygy Divisors}
\label{sec:syzygy-divisors}

Consider a basepoint-free $\grd$ $(C,L,V)$, so there is a map $f\colon
C\to\Proj\dual{V}$. On $\Proj\dual{V}$, we have the tautological sequence
\begin{equation*}
  0\to \oh{\Proj\dual{V}}(-1) \to \dual{V}\tensor\oh{\Proj\dual{V}}\to
  Q\to 0.
\end{equation*}
For any $i$, consider the restriction map to $C$:
\begin{equation*}
  \coh{1}{\wedge^i \dual{Q}\tensor\oh{\Proj\dual{V}}(2)}
  \to \coh{0}{\wedge^i f^*\dual{Q}\tensor L^{\tensor 2}}.
\end{equation*}
According to \cite[Proposition 2.5]{Farkas.Syzygies}, the map $f$
fails Green's property $(N_i)$ if and only if this restriction map
degenerates to a certain rank. In the case where the two vector spaces
have the same dimension, $f$ fails property $(N_i)$ exactly when the
restriction map is not an isomorphism.

To globalize this, consider the tautological sequence on
$u\colon\Proj\dual{\sV}\to\sgrd$:
\begin{equation*}
  0\to \oh{\Proj\dual{\sV}}(-1)\to u^*\dual{\sV}\to Q \to 0.
\end{equation*}
We will remove from $\sgrd$ the closed substack, isomorphic to
$\sC^r_{d-1}$, of $\grd$s with a basepoint, which has codimension
greater than 1. Then there is a morphism
$f\colon\syrd\to\Proj\dual{\sV}$ commuting with the projection to $\sgrd$.
\begin{diagram}
  \syrd & &\rTo^f & & \Proj\dual{\sV} \\
  &\rdTo <{\pi} & & \ldTo >u & \\
  & &\sgrd & & \\
\end{diagram}
Our restriction map now globalizes to
\begin{equation*}
  u_*\wedge^i \dual{Q}\tensor\oh{\Proj\dual{\sV}}(2)
  \to \pls\wedge^i f^*\dual{Q}\tensor \sL^{\tensor 2}.
\end{equation*}
Note also that all the higher direct images of these two bundles
vanish \cite[Proposition 2.1]{Farkas.Syzygies}.

Using the exact sequence
\begin{equation*}
  0\to \wedge^{i+1}\dual{Q}\tensor\oh{\Proj\dual{\sV}}(j) \to 
  \wedge^{i+1}u^*\sV\tensor\oh{\Proj\dual{\sV}}(j)\to
  \wedge^{i}\dual{Q}\tensor\oh{\Proj\dual{\sV}}(j+1)\to 0
\end{equation*}
for $j=0,1$, we have
\begin{equation*}
  \begin{split}
    \ch(u_*(\wedge^i\dual{Q}\tensor\oh{\Proj\dual{\sV}}(2)))
    &= \ch(\wedge^{i+1}\sV\tensor\sV) - \ch\wedge^{i+2}\sV \\
    &= \left[ \binom{r+1}{i+1}(r+1) - \binom{r+1}{i+2}\right] \\
    &\qquad+ \left[ \binom{r}{i}(r+1) + \binom{r+1}{i+1} - \binom{r}{i+1}
    \right]\cc + \dotsb\\
    &= (i+1)\binom{r+2}{i+2} + (r+2)\binom{r}{i}\cc + \dotsb
  \end{split}  
\end{equation*}

Applying Grothendieck-Riemann-Roch to $\pi$, we obtain that
\begin{align*} 
  \ch\pls(\wedge^if^*\dual{Q}\tensor\sL^{\tensor 2})&=
  \pls\bigl[f^*\ch\wedge^i\dual{Q}\exp(2c_1(\sL))\cdot\td_{\syrd/\sgrd}\bigr] \\
  &=\pls\bigl[f^*\ch\wedge^i\dual{Q}\exp(2c_1(\sL))\cdot\td_{\scrd/\sgrd}\bigr]. \\
\end{align*}
One computes
\begin{equation*}
  \begin{split}
    \ch\wedge^i\dual{Q} &= \binom{r}{i} +
    \binom{r-1}{i-1}(u^*\cc-\zeta) \\
    & \qquad - \binom{r-2}{i-2}\zeta u^*\cc +
    \frac{1}{2}\left[\binom{r-2}{i-2}-\binom{r-2}{i-1}\right]\zeta^2 +
    \dotsb,
  \end{split}
\end{equation*}
where $\zeta=c_1(\oh{\Proj\dual{\sV}}(1))$. It follows that
\begin{equation*}
  \begin{split}
    \ch\pls(\wedge^if^*\dual{Q}\tensor\sL^{\tensor 2}) &=
    (2d+1-g)\binom{r}{i} - d\binom{r-1}{i-1} \\
    & \quad + \left[ 2\binom{r}{i} - 2\binom{r-1}{i-1} +
      \frac{1}{2}\binom{r-2}{i-2} - \frac{1}{2}\binom{r-2}{i-1}
    \right] \ac \\
    & \quad + \left[ -\binom{r}{i} +
      \frac{1}{2}\binom{r-1}{i-1}\right] \bc  \\
    & \quad + \left[ (2d+1-g)\binom{r-1}{i-1} -
      d\binom{r-2}{i-2}\right] \cc \\
    & \quad + \binom{r}{i}\lam.
  \end{split}
  \end{equation*}
In order for the two vector bundles to have the same dimension,
therefore, we need that
\begin{equation*}
  (i+1)\binom{r+2}{i+2} = (2d+1-g)\binom{r}{i} - d\binom{r-1}{i-1},
\end{equation*}
which is achieved by setting $r=(i+2)s+2(i+1)$. The class of our
degeneracy locus in $\sgrd$ is
\begin{equation*}
  \begin{split}
    &\left[ 2\binom{r}{i} - 2\binom{r-1}{i-1} +
      \frac{1}{2}\binom{r-2}{i-2} - \frac{1}{2}\binom{r-2}{i-1}
    \right] \ac + \left[ -\binom{r}{i} +
      \frac{1}{2}\binom{r-1}{i-1}\right] \bc \\
    &\quad+ \left[ -
      (r+2)\binom{r}{i} + (2d+1-g)\binom{r-1}{i-1} -
      d\binom{r-2}{i-2}\right] \cc + \binom{r}{i}\lam.
  \end{split}
  \end{equation*}
In Section~\ref{sec:appendix} we prove that this locus is an actual
effective divisor when $i=0$ and $0\le s\le2$.
By Theorem~\ref{main-thm} it follows that the slope of the image locus
in $\Mgi$ is
\begin{equation}
  \label{eq:syzygy.slope}
  \frac{6f(i,t)}{t(i-2)g(i,t)}
\end{equation}
where
\begin{equation*}
  \begin{split}
    f(i,t)&=(24{i}^{2}+{i}^{4}+16+32i+8{i}^{3} ) {t} ^{7}+ (
    4{i}^{3}+{i}^{4}-16i-16 ) {t}^{6} \\
    &+ ( -13 {i}^{2}-7{i}^{3}+12-{i}^{4} ) {t}^{5}+ ( -{i}^{2}-14i-{
      i}^{4} -24-2{i}^{3} ) {t}^{4} \\
    &+ ( 2{i}^{2}+2{i}^{3}-6 i-4 )
    {t}^{3}+ ( 17{i}^{2}+{i}^{3}+50i+41 ) {t}^{ 2} \\
    &+ ( 7{i}^{2}+9+18i ) t+2+2i,
  \end{split}
  \end{equation*}
\begin{equation*}
  \begin{split}
    g(i,t)&=(12i+{i}^{3}+8+6{i}^{2} ) {t}^{6}+ ( -4
    i+{i}^{3}-8+2{i}^{2} ) {t}^{5} \\
    &+ ( -2-11i-{i}^{3}-7{ i}^{2} )
    {t}^{4}+ ( -{i}^{3}+5i ) {t}^{3} \\
    &+ ( 5 i+1+4{i}^{2} ) {t}^{2}+ (
    7i+11+{i}^{2} ) t+2+4 i,
  \end{split}
\end{equation*}
and $t=s+1$.

\subsection{Secant Plane Divisors}
\label{sec:secant-plane-divis}

Given a curve $C$ in $\Proj^r$, we can ask whether there is a
$k$-plane meeting $C$ in $e$ points---that is, an $e$-secant
$k$-plane. For example, an $m$-secant $0$-plane is just an $m$-fold
point of $C$.

If $(C,L,V)$ is the associated $\grd$, then this condition is
described by saying that there is an effective divisor $E$ on $C$ of
degree $e$ such that the restriction map
\begin{equation*}
  V\to\coh{0}{L_E}
\end{equation*}
has a kernel of dimension at least $r-k$.

To globalize this over $\Mgoi$, we consider the relative Hilbert scheme
of points on a family of curves. For a stable curve $X$ over $S$, the functor
\begin{equation*}
  \SchS\to\Sets
\end{equation*}
\begin{equation*}
  T\mapsto \{\text{subschemes $\Sigma\subset X_T$, finite of degree
    $e$ over $T$}\} 
\end{equation*}
is represented by a scheme $\Hilb^e(X/S)$, proper over $S$.

Now let $\shred=\Hilb^e(\scrd/\sgrd)$, and consider the projection
\begin{equation*}
  p\colon \shred\to\sgrd.
\end{equation*}
Let
\begin{equation*}
  \Sigma \subset \shred\times_{\sgrd} \scrd
\end{equation*}
be the universal subscheme. The is a natural map of vector bundles
\begin{equation*}
  p^*\sV \to (\pi_1)_*\pi_2^*\sL(\Sigma)
\end{equation*}
on $\shred$, and we are looking for the rank-$(k+1)$ locus of this map.
The class of the virtual degeneracy locus in $\sgrd$ is, therefore,
given by the Porteous formula as
\begin{equation}
  \label{eq:porteous}
  p_*\Delta_{e-k-1,r-k}\bigl(
  c((\pi_1)_*\pi_2^*\sL(\Sigma))/p^*c(\sV)\bigr).
\end{equation}
In order to get a locus of expected codimension one in $\sgrd$, we
need that
\begin{equation*}
  (e-k-1)(r-k)=e+1 
\end{equation*}
or $\rho(e,r-k-1,r)=-1$. Cotterill \cite[Theorem 1]{Cotterill} has
proved that, in this case, one obtains an actual effective divisor on
$\sgrd$. The computation~(\ref{eq:porteous}) can, in principle, be
carried out using the techniques in \cite{Ran}. Cotterill
\cite{Cotterill} has made some progress towards an explicit
calculation; there remains, however, some work to be done.  The final
answer will have the form
\begin{equation*}
  P_\ac \ac + P_\bc \bc + P_\cc \cc + P_\lam \lam + P_{\dn} \dn,
\end{equation*}
where the coeffcients are rational functions in $\Q(r,s,e,k)$.

\section{Special Families of Curves}
\label{sec:spec-famil-curv}

Our strategy for proving Theorem \ref{main-thm} will be to pull back
to various families of stable curves over which the space of linear
series is easier to analyze. In Section \ref{sec:definitions-families}
we define three families of pointed curves, and in Section
\ref{sec:comp-spec-famil} we compute $\pf$ for these special families.
In Section \ref{sec:pull-back-maps} we compute the pull-backs of
the standard divisor classes on $\Mgot$ to the base spaces of each of
our families. Assembling the results of these three sections, we
compute $\pf$ over the whole moduli space.

\subsection{Definitions of Families}
\label{sec:definitions-families}

\begin{defn}
  \label{def:mogb}
  Let $i\colon \Mogb \incl \Mgot$ be the family of marked stable curves defined
  by sending a $g$-pointed stable curve
  \begin{equation*}
    (C,p_1,\dotsc,p_g)
  \end{equation*}
  of genus 0 to the stable curve
  \begin{equation*}
    \bigl( C \union \Union_{i=1}^g E_i , p_0 \bigr)
  \end{equation*}
  of genus $g$, where $E_i$ are fixed non-isomorphic elliptic curves,
  attached to $C$ at the points $p_i$, and $p_0\in E_1$ is fixed as
  well.
\end{defn}

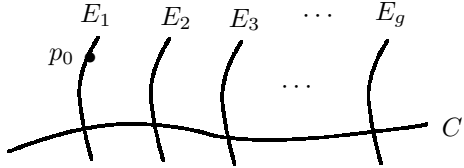
\begin{figure*}[h]
  \unitlength 1mm
\begin{picture}(62.57,24.11)(0,0)
\linethickness{0.3mm}
\qbezier(3.65,5.84)(12.78,9.41)(19.28,9.49)
\qbezier(19.28,9.49)(25.78,9.56)(30.87,7.95)
\qbezier(30.87,7.95)(37.34,6.77)(48.25,8.02)
\qbezier(48.25,8.02)(59.18,9.27)(59.16,9.45)
\linethickness{0.3mm}
\qbezier(15.53,21.03)(12.66,16.49)(13.08,12.68)
\qbezier(13.08,12.68)(13.49,8.88)(14.67,4.79)
\linethickness{0.3mm}
\qbezier(25.03,20.88)(22.16,16.34)(22.58,12.53)
\qbezier(22.58,12.53)(22.99,8.73)(24.17,4.64)
\linethickness{0.3mm}
\qbezier(34.54,20.42)(31.67,15.88)(32.08,12.08)
\qbezier(32.08,12.08)(32.49,8.28)(33.67,4.19)
\linethickness{0.3mm}
\qbezier(53.98,20.58)(51.11,16.04)(51.53,12.23)
\qbezier(51.53,12.23)(51.94,8.43)(53.12,4.34)
\put(62.57,9.15){\makebox(0,0)[cc]{$C$}}

\put(15.23,24.05){\makebox(0,0)[cc]{$E_1$}}

\put(25.9,23.65){\makebox(0,0)[cc]{$E_2$}}

\put(35,23.4){\makebox(0,0)[cc]{$E_3$}}

\put(54.49,24.11){\makebox(0,0)[cc]{$E_g$}}

\put(45.06,23.89){\makebox(0,0)[cc]{$\dotsb$}}

\put(42.2,14.43){\makebox(0,0)[cc]{$\dotsb$}}

\put(14.46,18.24){\makebox(0,0)[cc]{$\bullet$}}

\put(10.68,18.45){\makebox(0,0)[cc]{$p_0$}}

\end{picture}
  \caption{$i(C,p_1,\dotsc,p_g)$}
\end{figure*}

\begin{defn}
  \label{def:M21}
  Let $j\colon \Mttl \incl \Mgot$ be the family of curves defined by
  sending a marked curve $(C,p)$ to the marked stable curve
  \begin{equation*}
    (C \union C', p_0)
  \end{equation*}
  where $(C',p',p_0)$ is a fixed
  Brill-Noether-general
  curve in $\mtl_{g-2,2}$, attached nodally to $(C,p)$ at $p'$.
\end{defn}

\begin{figure*}[h]
  %%Created by jPicEdt 1.4pre4: mixed JPIC-XML/LaTeX format
%%Fri Mar 24 11:35:15 CST 2006
%%Begin JPIC-XML
%<?xml version="1.0" standalone="yes"?>
%<jpic x-min="3.92" x-max="58.24" y-min="2.3" y-max="13.11" auto-bounding="true">
%<multicurve points="(8.51,3.92);(21.65,3.92);(24.58,4.18);(30.61,12.83)"
%	
%	 fill-style="none"
%	 />
%<multicurve points="(19.41,13.11);(29.87,2.66);(35.56,2.3);(53.65,3.58)"
%	
%	 fill-style="none"
%	 />
%<text text-vert-align="center-v" text-hor-align="center-h" anchor-point="(3.92,3.65)" text-frame="noframe"
%	
%	 fill-style="none"
%	 >
%$C$
%</text>
%<text text-vert-align="center-v" text-hor-align="center-h" anchor-point="(58.24,3.38)" text-frame="noframe"
%	
%	 fill-style="none"
%	 >
%$C'$
%</text>
%<text text-vert-align="center-v" text-hor-align="center-h" anchor-point="(45,3.11)" text-frame="noframe"
%	
%	 fill-style="none"
%	 >
%$\bullet$
%</text>
%<text text-vert-align="center-v" text-hor-align="center-h" anchor-point="(45,6.35)" text-frame="noframe"
%	
%	 fill-style="none"
%	 >
%$p_0$
%</text>
%</jpic>
%%End JPIC-XML
%LaTeX-picture environment using emulated lines and arcs
\unitlength 1mm
\begin{picture}(58.24,13.11)(0,0)
\linethickness{0.3mm}
\qbezier(8.51,3.92)(17.95,3.67)(22.23,5.13)
\qbezier(22.23,5.13)(26.5,6.6)(30.61,12.83)
\linethickness{0.3mm}
\qbezier(19.41,13.11)(26.72,5.54)(33.67,3.95)
\qbezier(33.67,3.95)(40.62,2.35)(53.65,3.58)
\put(3.92,3.65){\makebox(0,0)[cc]{$C$}}

\put(58.24,3.38){\makebox(0,0)[cc]{$C'$}}

\put(45,3.11){\makebox(0,0)[cc]{$\bullet$}}

\put(45,6.35){\makebox(0,0)[cc]{$p_0$}}

\end{picture}
  \caption{$j(C,p)$}
\end{figure*}

\begin{defn}
  \label{def:marked-point}
  Fix Brill-Noether general curves
  \begin{align*}
    (C_1, p_1) & \in \m_{h,1} \\
    (C_2, p_2) & \in \m_{g-h,1} 
  \end{align*}
  and let $C = C_1 \union C_2$ be their nodal union along the $p_i$. Let
  $k_h\colon C_1 \incl \Mgot$ be the map sending $p\in C_1$ to the marked
  curve $(C,p)$.
\end{defn}

\subsection{Computations on the Special Families}
\label{sec:comp-spec-famil}

\begin{lemma}\label{lem:eta-mogb}
  For the family
  \begin{equation*}
    i\colon \Mogb \incl \Mgot
  \end{equation*}
  we have
  \begin{equation*}
    \pf\ac = \pf \bc = \pf\cc = 0
  \end{equation*}
\end{lemma}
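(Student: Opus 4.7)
The plan is to use base change along $i$ and then exploit the rigidity coming from the fixed elliptic tails. Set $\sgrd_0 = \sgrd \times_{\Mgot} \Mogb$ with its proper map $\eta_0\colon \sgrd_0 \to \Mogb$. Since $\eta$ is proper and $i$ is a regular immersion of smooth stacks, commutativity of proper push-forward with the refined Gysin morphism gives $i^* \pf \xi = \eta_{0*}(\xi|_{\sgrd_0})$ for any $\xi \in A_*(\sgrd)$. It therefore suffices to show that the restrictions of $\ac$, $\bc$, and $\cc$ to $\sgrd_0$ push forward to zero under $\eta_0$.

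Next, I would describe the structure of $\sgrd_0$ using the Eisenbud--Harris theory of refined limit linear series on tree-like curves. A geometric point of $\sgrd_0$ over $(C, p_1, \dotsc, p_g) \in \Mogb$ consists of an aspect $(L_{E_i}, V_{E_i})$ on each fixed elliptic tail $E_i$ with a prescribed vanishing sequence at $p_i$ (and at $p_0$ for $i=1$), together with an aspect on the genus-zero chain $C$ that is compatible at every node. Because each $E_i$ is a fixed, Brill--Noether general elliptic curve, the set of aspects on $E_i$ with a given vanishing sequence is finite. Hence $\sgrd_0$ decomposes into finitely many components, and on each the aspect data on every $E_i$ is literally constant.

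The vanishing of $\pf \cc$ is immediate: by construction $\sV$ is the aspect on the component containing the marked point, namely $E_1$, and on each component of $\sgrd_0$ this aspect is a fixed vector space $V_{E_1}$. So $\sV$ restricts to the trivial bundle on each component, giving $\cc|_{\sgrd_0} = c_1(\sV) \cap [\sgrd_0] = 0$, and thus $\pf \cc = 0$ after pulling back along $i$. For $\ac$ and $\bc$ the same rigidity can be pushed further. Using the trivialization $\sg^*\sL \isom \oh{\sgrd}$ along the marked section and propagating it through the nodes using compatibility, one checks that the restriction of $\sL$ to the pre-image of any fixed tail $E_i$ in $\syrd_0$ is literally the pullback $p_1^* L_{E_i}$ of a fixed line bundle on $E_i$ under the first projection. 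Consequently $c_1(\sL)^2$ and $c_1(\sL) \cdot c_1(\can)$ both restrict to zero on those pre-images, since $c_1(L_{E_i})^2 = 0$ and $c_1(L_{E_i}) \cdot c_1(\can_{E_i}) = 0$ on the one-dimensional $E_i$.

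What remains are the contributions to $\pi_* c_1(\sL)^2$ and $\pi_* c_1(\sL) c_1(\can)$ coming from the pre-image of the rational chain $C$ and from the exceptional $\Proj^1$'s inserted at nodes with non-standard vanishing sequences. On the rational chain $\sL$ has degree $0$, so its Chern class there is carried entirely by the node-corrections and exceptional twists; on each exceptional component $\sL$ has degree $1$. The hard part will be verifying that, after carefully accounting for the structure of the quasi-stable model over $\sgrd_0$ and the accompanying twists of $\can$ by the boundary divisors, all remaining contributions assemble to a class supported on codimension-$\ge 2$ strata of $\Mogb$ and hence vanish in $A^1(\Mogb)$. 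I expect this verification to reduce to a direct local computation at each node, combined with the vanishing $c_1(\sL)^2 = 0$ on the one-dimensional fibers, which forces the cancellation between exceptional-component and node-correction terms.
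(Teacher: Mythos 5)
Your overall strategy (restrict to the family and exploit rigidity of the fixed elliptic tails) is the same as the paper's, but there are two genuine gaps. First, the rigidity itself is not established. You assert that ``the set of aspects on $E_i$ with a given vanishing sequence is finite,'' but this is false for an arbitrary vanishing sequence (a sub\nobreakdash-maximal sequence at the node cuts out a positive-dimensional family of $\grd$'s on an elliptic curve), and you never identify which vanishing sequences actually occur. The point you are missing is the Pl\"ucker formula on the genus-$0$ components: a $\grd$ on a rational curve has total ramification exactly $(r+1)(d-r)$, so by additivity of the Brill--Noether numbers the aspect on each $E_i$ is forced to have \emph{maximal} ramification at its node, hence to be the unique series $(d-r-1)p_i+\ls{(r+1)p_i}$. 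Only with this in hand is the aspect on $E_1$ literally constant over $\Mogb$, which is what makes $\sV$ trivial and $\cc=0$. Without it, your constancy claim for $\sV$ is unsupported.

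Second, your treatment of $\ac$ and $\bc$ is not a proof: you reduce to ``contributions from the rational chain and the exceptional $\Proj^1$'s'' and then explicitly defer the verification (``the hard part will be\dots,'' ``I expect\dots''). In fact, once the aspects are pinned down as above, there is nothing left to cancel: the limit series are refined, so no exceptional components occur, and the multidegree-$(d,0,\dotsc,0)$ bundle is the degree-$0$ twist $\oh{E_i}(dp_i-dp_i)\isom\oh{E_i}$ on each $E_i$ with $i\ne1$, is trivial on the rational components, and is $\pi_2^*\oh{E_1}(dp)$ on $\Mogb\times E_1$. Hence $c_1(\sL)$ is everywhere a pullback from a fixed curve fiber, so $c_1(\sL)^2$ and $c_1(\sL)\cdot c_1(\can)$ vanish identically before pushing forward --- no codimension-$2$ support argument or local cancellation at the nodes is needed, and your proposed route for producing one is not carried out.
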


\begin{lemma}\label{lem:eta-M21}
  For the family
  \begin{equation*}
    j \colon \Mttl \incl \Mgot
  \end{equation*}
  we have
  \begin{align*}
    \pf\ac
    &= \frac{2dN(d - 2g + 2)}{3(g-1)}(3\psi - \lam - \de_1)
    + \frac{dN}{g-1}(\lam + \de_1 - 4\psi) \\
    \pf\bc
    &= \frac{dN}{g-1}(\lam + \de_1 - 4\psi) \\
    \pf\cc
    &= \frac{-N\mess}{3(g-1)}(3\psi - \lam - \de_1) \text{,}
  \end{align*}
  where $N$ and $\mess$ are defined in the statement of Theorem \ref{main-thm}.
\end{lemma}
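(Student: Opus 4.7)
The plan is to pull $\sgrd$ back along $j\colon \Mttl \incl \Mgot$ and analyze it fiber-by-fiber via limit linear series, stratify by combinatorial type at the node, and then compute $\pf$ on each stratum. For a generic $(C_0, p) \in \Mttl$, a refined limit $\grd$ on the reducible curve $C_0 \cup_{p = p'} C'$ consists of a pair of aspects $(L_0, V_0)$ on $(C_0, p)$ and $(L', V')$ on $(C', p')$ with vanishing sequences $a^1_i$ at $p$ and $a^2_i$ at $p'$ satisfying $a^1_i + a^2_{r-i} = d$. Since $C'$ is Brill--Noether general of genus $g - 2$, both adjusted Brill--Noether numbers $\rho(2, r, d, a^1)$ and $\rho(g - 2, r, d, a^2)$ are non-negative, and since they sum to $\rho(g, r, d) = 0$, each must vanish. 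Consequently, $\eta^{-1}(j(\Mttl))$ decomposes as a finite disjoint union $\bigsqcup_a Z_a$ indexed by admissible vanishing sequences $a = a^2$ on the $C'$-side; each $Z_a \to \Mttl$ is finite \'etale, with degree $n_{a^2} \cdot m_{a^1}$, where $n_{a^2}$ counts the finitely many $(L', V')$ on $(C', p')$ of type $a^2$, and $m_{a^1}$ is the degree of the relative Brill--Noether scheme of $\grd$'s with ramification $\geq a^1$ at the universal marked point on the genus-$2$ universal curve over $\Mttl$.

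Next I would pin down the universal bundles on each stratum. Because $p_0 \in C'$, the universal line bundle $\sL$ on the quasi-stable model has degree $d$ on the preimage of $C'$ and degree $0$ on $C_0$; the constant aspect $(L', V')$ determines $\sL|_{C'}$, while $\sL|_{C_0}$ is the degree-zero twist $L_0 \otimes \oh{C_0}(-d p)$ of the $C_0$-aspect. On $Z_a$ the subbundle $\sV \subset \pls \sL$ is obtained by extending the constant subspace $V' \subset H^0(C', L')$ across the node, so any non-triviality of $c_1(\sV)|_{Z_a}$ comes entirely from this extension and the variation of the node fiber $\sL|_p$. Applying Grothendieck--Riemann--Roch to $\pi\colon \syrd \to \sgrd$ restricted to $Z_a$ then expresses $\ac|_{Z_a}$, $\bc|_{Z_a}$, and $\cc|_{Z_a}$ as explicit polynomials in $\lam$, $\psi$, and $\de_1$: the $C'$-part contributes only through a constant determined by $(L', V')$ and $p_0$, while the $C_0$-part is computed via the standard tautological formulas for the universal genus-$2$ curve with a section, together with the class of the ramified Brill--Noether locus of type $a^1$ at $p$.

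Summing over all admissible $a$ and pushing forward under the finite map $Z_a \to \Mttl$ reassembles $\pf \ac$, $\pf \bc$, and $\pf \cc$ on $\Mttl$. The constants $N$ and $\mess$ of Theorem~\ref{main-thm} should emerge as combinatorial sums over matched pairs $(a^1, a^2)$: products of multiplicities $n_{a^2}$ and $m_{a^1}$ indexed by Young diagrams inscribed in an $(r+1) \times (g - d + r)$ rectangle. The main obstacle is evaluating these sums in closed form --- one needs hook-length / Castelnuovo-type identities to recognize the classical expression for $N$ and the value of $\mess$ in the theorem. A secondary subtlety is the correct bookkeeping of contributions from exceptional components of the quasi-stable model to $\pls c_1(\sL)^2$; these must cancel against the $C'$-side contribution (where $\sL$ is concentrated in degree) in order to yield the relatively simple expressions in the lemma.
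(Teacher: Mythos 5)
Your decomposition of the fiber over a \emph{generic} point of $\Mttl$ is essentially the paper's starting point (two maximal vanishing sequences on the genus-$2$ aspect, finitely many compatible aspects on the fixed Brill--Noether-general $C'$, trivial covers since $C'$ is fixed), but the proposal has a genuine gap: you assume $\eta^{-1}(j(\Mttl))$ is a disjoint union of covers \emph{finite \'etale over all of} $\Mttl$. This fails along the Weierstrass divisor $W\subset\Mttl$ (class $3\psi-\lam-\de_1$). There the pointed Brill--Noether genericity of $(C_0,p)$ breaks down, one can have $\rho(2,r,d,a^1)=-1$, and the compatible aspects on $C'$ then sweep out one-dimensional families; so $j^*\sgrd$ is the union of a trivial $N$-sheeted cover and positive-dimensional components sitting over $W$. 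Since the lemma is a statement about divisor classes on $\Mttl$, these codimension-one excess fibers cannot be discarded; in fact they carry the whole answer for $\cc$: on your finite strata the marked point lies on the fixed $C'$ and the aspect $(L',V')$ is constant, so $\sV$ is trivial and $c_1(\sV)=0$ there, which in your framework would force $\pf\cc=0$, contradicting $\pf\cc=\frac{-N\mess}{3(g-1)}(3\psi-\lam-\de_1)$. The same excess locus also produces the first term $\frac{2dN(d-2g+2)}{3(g-1)}(3\psi-\lam-\de_1)$ of $\pf\ac$; your combinatorial sums over matched vanishing sequences can only reproduce the ``$12\lam-\dn-8\psi$''-type contributions of the finite part and will never generate the constant $\mess$.

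The paper's proof supplies exactly the missing ingredient: after moving to a test curve $B\subset\Mttl$ transverse to $W$ and the boundary, it computes the finite part much as you describe (trivial $\sL$ on type-$a_1$ sheets; $\omega_{\sC/B}(-2\Sg+\De)\tensor\pi^*\Psi^{\tensor-3}$ on type-$a_2$ sheets, giving $\ac=\bc=12\lam-\dn-8\psi$, $\cc=0$), and then analyzes the one-dimensional scheme $\Grd(C';p',(0,1,2,\dotsc,2))$ lying over each Weierstrass point. To evaluate $\ac$ and $\cc$ on it, $C'$ is specialized to a comb of $g-2$ elliptic curves attached to a $\Proj^1$, the contributing components are identified, and the resulting intersection numbers are computed by Schubert calculus on $\Grass(r,\Proj^d)$ (Lemma~\ref{lem:schubert}); this is where $N$ and $\mess$ actually arise. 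Without an analysis of the non-finite locus over $W$ (or an equivalent excess-intersection argument), your approach cannot yield the stated formulas.
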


\begin{lemma}\label{lem:eta-marked-point}
  For the family
  \begin{equation*}
    k_h \colon C_1 \incl \Mgot
  \end{equation*}
  we have
  \begin{align*}
    \deg\pf\ac & = -d^2 N \\
    \deg\pf\bc & = -\bigl[2(g-h) - 1\bigr] dN \\
    \deg\pf\cc & = - \bigl[rh + \frac{1}{2}r(r+1)\bigr] N
  \end{align*}
\end{lemma}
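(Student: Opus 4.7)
The plan is to pull back the universal family along $k_h\colon C_1\incl\Mgot$ and analyze the resulting one-dimensional base explicitly. Since the underlying curve $C=C_1\union C_2$ (glued at $p_1=p_2$) is fixed and only the marked point $p\in C_1$ varies, the fiber of $\sgrd\to\Mgot$ over $(C,p)$ is the set of limit linear series on $C$; by the Brill-Noether formula and the Brill-Noether genericity of the $(C_i,p_i)$, this set has cardinality $N$. Hence $k_h^*\sgrd$ decomposes as a disjoint union $\bigsqcup_l X_l$ indexed by refined limit linear series $l=(l_1,l_2)$ on $C$, and each projection $X_l\to C_1$ is an isomorphism; so $\deg k_h^*\pf\ac=\sum_l\deg\ac|_{X_l}$, and similarly for $\bc$ and $\cc$.

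On each $X_l\isom C_1$ the next step is to describe the universal quasi-stable family $\pi^l\colon\syrd^l\to X_l$ explicitly. Away from the fiber over $p_1\in X_l$, it is the trivial product $X_l\times C$ with marked section the diagonal $\Delta\subset C_1\times C_1\subset C_1\times C$; at $p=p_1$, the marked point would fall on the node of $C$, so the quasi-stable model inserts an exceptional $\Proj^1$, realizing $\syrd^l$ as a specific blow-up of $X_l\times C$ at $(p_1,p_1)$. The universal line bundle $\sL^l$ has fiber degree $d$ on the $C_1$-component, $0$ on the $C_2$-component, $1$ on the exceptional component, and is trivialized along the marked section; away from the exceptional fiber one writes it as $\pi_2^*L\tensor\pi_1^*(L|_{C_1})^{-1}$, where $L$ extends the $C_1$-aspect line bundle $L_1$ to $C$ by a degree-$0$ bundle on $C_2$, and near $p_1$ one corrects by a multiple of the exceptional divisor. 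The sub-bundle $\sV^l\incl\pi^l_*\sL^l$ corresponds to the aspect $V_1\subset\coh{0}{L_1}$, which is the same for every $p$.

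Then each of the three classes is computed by direct intersection theory on the surface $\syrd^l$. For $\ac$, evaluate $\int_{\syrd^l}c_1(\sL^l)^2$; for $\bc$, $\int_{\syrd^l}c_1(\sL^l)\cdot c_1(\can)$, with $\can$ the relative dualizing sheaf of $\pi^l$; for $\cc$, compare $\sV^l$ to the pullback of the $r$-jet bundle of $\sL^l$ at the marked section, so that $c_1(\sV^l)$ is expressed as $\frac{r(r+1)}{2}\psi$ (with $\psi=2h-1$, the degree of $\can_{C_1}(p_1)$) minus the ramification weight of the aspect of $l$ at the marked point as it moves on $X_l$. The $\ac$ and $\bc$ computations give per-LLS answers depending only on $(g,h,r,d)$; multiplying by $N$ yields the first two lines of the lemma. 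For $\cc$, the per-LLS weight depends on $l$ (through the vanishing sequence of $l_2$ at $p_2$), so the sum over $l$ requires a Brill-Noether/Schubert identity of the form $\sum_l w_{p_2}(l_2)=Nrh$ to assemble the total into $-[rh+\frac{r(r+1)}{2}]N$.

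The hard part will be the exceptional fiber over $p_1$. The quasi-stable modification produces a non-trivial contribution to each intersection: $\sL^l$ acquires a degree-$1$ component on the inserted $\Proj^1$, and $\can$ changes correspondingly. A computation carried out naively on $X_l\times C$ gives answers differing from the stated formulas by correction terms that come from the self-intersection of the exceptional divisor of the blow-up at $(p_1,p_1)$; tracking these corrections precisely, and combining them with the Brill-Noether/Schubert identity for $\cc$, is the crux of the proof.
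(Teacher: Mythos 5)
Your overall architecture is the paper's: split $k_h^*\sgrd$ into $N$ sheets isomorphic to $C_1$ (the aspects are constant along a sheet since only the marked point moves), realize the universal curve over a sheet as the blow-up $\sC_1$ of $C_1\times C_1$ at $(p_1,p_1)$ glued to $C_1\times C_2$ along the horizontal sections, and compute $\ac$, $\bc$, $\cc$ by intersection theory on that surface. But your normalization of $\sL$ on the fiber over $p=p_1$ is wrong, and it is not a peripheral detail. The inserted $\Proj^1$ there carries the marked point and has three special points, so it is a component of the \emph{stable} model, not an exceptional curve contracted by stabilization; by the conventions of Section~\ref{sec:tautological-classes}, $\sL$ has degree $d$ on it (and degree $0$ on the $C_1$-piece of that fiber), not degree $1$. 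Concretely $c_1(\sL)=d\pi_2^*p-dE$ on $\sC_1$, and since $(\pi_2^*p)^2=\pi_2^*p\cdot E=0$ and $E^2=-1$, the entire answer $\deg\ac=-d^2$ comes from the coefficient $-d$ of $E$; with your normalization (coefficient $-1$) you would get $-1$ per sheet. Note that the two conditions you do impose --- restriction $L_1$ on generic fibers and triviality along the marked section --- do not pin down this coefficient, since one may twist by $kE-k\pi_1^*p_1$ for any $k$ while preserving both; the degree on the special fiber is exactly the datum you cannot afford to get wrong.

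For $\cc$ the paper takes a different and cleaner route than your jet-bundle/Pl\"ucker proposal: it twists $\sV$ by the degree-zero bundle $L_1(-dp_1)$ and exhibits an explicit isomorphism $\bigoplus_j\mathcal{O}_{C_1}((a_j-d)p_1)\isom\sV\tensor L_1(-dp_1)$ using a basis of $V_1$ adapted to the vanishing sequence $(a_j)$ at the fixed node $p_1$, so that $\deg\cc=\sum_j(a_j-d)=-rh-\frac{1}{2}r(r+1)$ on every sheet. This is the same for all sheets because Brill--Noether generality of $(C_1,p_1)$ forces $w_{p_1}(V_1)=\rho(h,r,d)$ exactly; in particular no summation identity over the sheets is needed, and the identity you invoke is true but holds term by term ($w_{p_2}(l_2)=\rho(g-h,r,d)=rh$, using $\rho(g,r,d)=0$). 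Your jet-bundle route can be made to work, but the naive count $\frac{1}{2}r(r+1)(2h-1)-\deg R$, with $R$ the Pl\"ucker ramification divisor of $V_1$, is off by exactly $w_{p_1}(V_1)=r(g-h)$: the degeneracy divisor of $\sV\to J^r(\sL)|_\sigma$ is $R$ minus $w_{p_1}(V_1)$ times the point $p_1$, precisely because over $p=p_1$ the marked point sits on the inserted $\Proj^1$ and $\sL$ is twisted by $-dE$. That correction is the ``crux'' you defer, and it carries all of the $rh$ in the final answer, so as written the proposal does not yet contain a proof of the third formula.
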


\subsection{Pull-Back Maps on Divisors}
\label{sec:pull-back-maps}

\begin{lemma}
  \label{lem:pull-back-mogb}
  Let $\epsi$ be the class of the closure of the locus on $\Mogb$ of
  stable curves with two components, the component containing the
  first marked point having $i$ marked points.
  \begin{enumerate}
  \item The classes $\epsi$ are independent in $\coh{2}{\Mogb;\Q}$.
  \item
    For the family
    \begin{equation*}
      i\colon \Mogb \incl \Mgot
    \end{equation*}
    we have the following pull-back map on divisor classes.
    \begin{align*}
      i^* \lam & = i^*\psi = i^* \dn  = 0 \\
      i^* \di & = \epsi \qquad \textrm{for } i = 2,3,\ldots,g-2 \\
      i^*\de_1 & = -\sum_{i=2}^{g-2} \frac{(g-i)(g-i-1)}{(g-1)(g-2)}\epsi \\
      i^*\de_{g-1} & = -\sum_{i=2}^{g-2} \frac{(g-i)(i-1)}{g-2}\epsi \\
    \end{align*}
   \end{enumerate}
\end{lemma}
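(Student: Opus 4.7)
The plan is to treat (a) with a test-curve argument and to analyze the family $i$ directly for (b); the only real obstacle is the excess-intersection computation for $i^*\de_1$ and $i^*\de_{g-1}$. For (a), I would construct $g-3$ one-parameter test families $B_2,\dotsc,B_{g-2}$ in $\Mogb$ with invertible intersection matrix against the $\epsi$. The families arise as follows: fix a generic $(g-1)$-pointed stable genus-zero curve $X_k$ whose dual graph is a caterpillar with a specified placement of marked points, and let a single additional marked point vary over $X_k$. The boundary types encountered as the moving point approaches a special point of $X_k$ can be prescribed combinatorially, allowing one to arrange a triangular intersection matrix.

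For (b), the four vanishings and the identification $i^*\di=\epsi$ for $2\le i\le g-2$ are direct. The Hodge bundle $\pls\can$ is trivial over $\Mogb$: in each geometric fibre $X=C\cup E_1\cup\dotsb\cup E_g$ the rational spine $C$ contributes nothing to $\coh{1}{\oh{X}}$, while the elliptic tails contribute a fixed rank-$g$ summand (the $E_j$ being fixed), so $i^*\lam=0$. The marked section lies on the fixed elliptic curve $E_1$ at the fixed point $p_0$, so $i^*\psi=0$. The dual graph of every fibre is a tree, so no non-separating node appears and $i^*\dn=0$. The equality $i^*\di=\epsi$ for $2\le i\le g-2$ is a matching of boundary strata: a degeneration of the spine $C$ into two components produces a separating node of type $(i,g-i)$ with $p_0$ on the genus-$i$ side precisely when one of the two spine components carries $E_1$ together with $i-1$ of the remaining elliptic tails, which is the combinatorial condition defining $\epsi$.

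The main obstacle is the excess intersection for $i^*\de_1$ and $i^*\de_{g-1}$, since $i(\Mogb)$ is contained in both divisors. I would use that the normal bundle of a boundary divisor at a node is the tensor product of the two branch-tangent-lines. For $\de_1$, the relevant $(1,g-1)$-node is $E_1\cap C$, and since $E_1$ and its attaching point are fixed, the contribution reduces to
\begin{equation*}
  i^*\de_1 \;=\; c_1\bigl(\sg_1^*T_{\sC/\Mogb}\bigr) \;=\; -\psi_1^{\Mogb},
\end{equation*}
where $\psi_1^{\Mogb}$ denotes the psi class at the first marked section of the universal curve on $\Mogb$. For $\de_{g-1}$, the divisor has $g-1$ local analytic branches along $i(\Mogb)$, one for each $(g-1,1)$-node $E_j\cap C$ with $j\ge 2$; summing the corresponding normal-bundle contributions gives $i^*\de_{g-1} = -\sum_{j=2}^{g}\psi_j^{\Mogb}$.

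To conclude, I would express the $\psi_j^{\Mogb}$ on $\Mogb=\overline{\mathcal{M}}_{0,g}$ in terms of the $\epsi$ via the standard formula
\begin{equation*}
  \psi_j^{\Mogb} \;=\; \sum_{\substack{I\ni j,\ k,l\notin I\\ 2\le|I|\le g-2}}\delta_{0,I},
\end{equation*}
valid for any two distinct $k,l\in\{1,\dotsc,g\}\setminus\{j\}$. Averaging over all ordered pairs $(k,l)$ and using $\sum_{I\ni 1,|I|=i}\delta_{0,I}=\epsi$ (respectively $\sum_{I\not\ni 1,|I|=g-i}\delta_{0,I}=\epsi$) produces
\begin{equation*}
  \psi_1^{\Mogb} = \sum_{i=2}^{g-2}\frac{(g-i)(g-i-1)}{(g-1)(g-2)}\epsi, \qquad \sum_{j=2}^{g}\psi_j^{\Mogb} = \sum_{i=2}^{g-2}\frac{(g-i)(i-1)}{g-2}\epsi,
\end{equation*}
which substitute to give the claimed formulas for $i^*\de_1$ and $i^*\de_{g-1}$.
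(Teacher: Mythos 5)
Your proposal is correct, and for part (b) it reaches the key formulas by a genuinely different (cleaner) route than the paper. Both arguments rest on the same excess-intersection fact — the image of $i$ lies inside $\de_1$ and $\de_{g-1}$, and the pull-back of each boundary divisor is the sum over its local branches of the first Chern classes of the normal bundles, i.e.\ $-(\psi'+\psi'')$ at each node, with the tail factors vanishing because the $E_j$ and their attaching points are fixed. Your identification of the branches is right: stability of the genus-zero spine rules out any genus-one piece other than $E_1$ (for $\de_1$) or $E_2,\dotsc,E_g$ (for $\de_{g-1}$), so $i^*\de_1=-\psi_1$ and $i^*\de_{g-1}=-\sum_{j=2}^{g}\psi_j$ on $\Mogb$. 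Where you diverge from the paper is in evaluating these $\psi$-classes: you invoke the standard comparison $\psi_j=\sum_{I\ni j,\,k,l\notin I}\delta_{0,I}$ on $\overline{\mathcal{M}}_{0,g}$ and average over the pairs $(k,l)$, and your coefficients do check out ($(g-i)(g-i-1)/\bigl((g-1)(g-2)\bigr)$ for $\psi_1$, and $(i-1)(g-i)(g-i-1)+( g-i)i(i-1)=(i-1)(g-i)(g-1)$ giving $(g-i)(i-1)/(g-2)$ for $\sum_{j\ge2}\psi_j$). The paper instead follows Harris--Morrison: it restricts to a one-parameter family $B$ transverse to the boundary, writes $\de_1=\Sigma_1^2$ and $\de_{g-1}=\sum_{j\ge2}\Sigma_j^2$ (the same normal-bundle fact in disguise, via adjunction for sections), and then rederives the boundary expressions by contracting the component meeting $\Sigma_1$ and exploiting that the resulting sections lie in a $\Proj^1$-bundle. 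Your route buys brevity and avoids the contraction bookkeeping at the cost of quoting the known $\psi$-class formula on $\overline{\mathcal{M}}_{0,n}$; the paper's is self-contained and more elementary. For part (a) your test-curve strategy is essentially the paper's (it fixes a curve in $\eps_j$ and moves a point on the component with $g-j$ marked points, plus one family moving the first point on a smooth rational curve, and exhibits an explicit nonsingular, nearly triangular intersection matrix); your version is only a sketch, so to make it complete you would still need to specify the caterpillar families and verify invertibility of the resulting matrix, but there is no conceptual gap.
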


% \begin{figure*}
  
%   \caption{An element of $\epsi$.}
% \end{figure*}

\begin{lemma}
  \label{lem:pull-back-M21}
  For the family
  \begin{equation*}
    j \colon \Mttl \incl \Mgot
  \end{equation*}
  we have the following pull-back map on divisor classes.
  \begin{align*}
    j^* \lam & = \lam & j^*\psi &= 0 \\
    j^* \dn & = \dn   & j^*\di &= 0 \quad i=1,2,\ldots,g-3 \\
    j^* \de_{g-2} & = -\psi & j^* \de_{g-1} & = \de_1
  \end{align*}
\end{lemma}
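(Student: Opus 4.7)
The plan is to analyze each pullback using the explicit description of $j$: the universal pointed stable curve over $j(\Mttl)$ is obtained from the universal curve $\sC\to\Mttl$ by gluing on the constant family $\Mttl\times C'$ along the marked section of $\sC$ and the constant section $\Mttl\times\{p'\}$, with new marked section $\Mttl\times\{p_0\}$.

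The formulas for $\lam$, $\psi$, and $\dn$ then follow by direct inspection. Applying the residue exact sequence to the glued family, and using that holomorphic differentials on either smooth component automatically have vanishing residue at the attaching node, one obtains a canonical splitting of the pulled-back Hodge bundle as $\pi_*\omega_{\sC/\Mttl}\oplus(\Mttl\times H^0(C',\omega_{C'}))$ with trivial second summand, so $j^*\lam=\lam$. The cotangent line at $p_0$ pulls back to the constant line $T^*_{p_0}C'$, giving $j^*\psi=0$. The non-disconnecting nodes of $j(C,p)$ are exactly those internal to $C$ (the attaching node is always disconnecting), so $j^{-1}(\dn)=\dn$ with multiplicity one.

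For the separating boundary divisors I identify $j^{-1}(\di)$ explicitly. For $1\le i\le g-3$, any disconnecting node of $j(C,p)$ bounds a marked-point side containing the irreducible curve $C'$, hence of genus at least $g-2$; thus $j(\Mttl)\cap\di=\emptyset$ and $j^*\di=0$. For $i=g-2$, the entire image $j(\Mttl)$ lies in $\de_{g-2}$ with $C'$ as the marked-point component, so $j^*\de_{g-2}$ equals the first Chern class of the pulled-back normal bundle $N_{\de_{g-2}/\Mgot}$; this normal bundle at $(C\cup C',p_0)$ is $T_pC\otimes T_{p'}C'$, whose second factor is trivial and whose first has Chern class $-\psi$, giving $j^*\de_{g-2}=-\psi$. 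For $i=g-1$, a genus-$1$ tail on the non-marked side must come from a detachable genus-$1$ component of $C$, which occurs exactly on $\de_1\subset\Mttl$ (where $C=E_1\cup E_2$ with both components elliptic); matching the normal bundles on both sides (each equal to $T_qE_1\otimes T_qE_2$ at the corresponding point) gives multiplicity one, hence $j^*\de_{g-1}=\de_1$.

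The main technical point will be the $\de_{g-2}$ case, where $j$ factors through the divisor itself and the pullback must be computed as a self-intersection via the normal bundle rather than by transverse intersection. One must verify that the trivial factor $T_{p'}C'$ does not interfere with the $-\psi$ contribution from $T_pC$, and that no further excess contributions appear from the other boundary strata that $j(\Mttl)$ meets.
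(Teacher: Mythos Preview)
Your argument is correct and is precisely the standard computation one would carry out here; the paper in fact omits the proof entirely, remarking only that it is straightforward. Your treatment of the one nontrivial case, $j^*\de_{g-2}=-\psi$ via the normal bundle $T_pC\otimes T_{p'}C'$ with trivial second factor, is exactly what is needed, and your check that no excess contributions arise from deeper boundary strata is appropriate.
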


\begin{lemma}
  \label{pull-back-marked-point}
  For the family
  \begin{equation*}
    k_h\colon C_1 \incl \Mgot
  \end{equation*}
  we have the following pull-back map on divisor classes.
  \begin{align*}
    \deg k_h^*\lam & = 0 & \deg k_h^*\psi &= 2h -1 \\
    \deg k_h^*\de_h &= -1 & \deg k_h^*\de_{g-h} &= 1 \\
    \deg k_h^*\di &= 0 \quad i \ne h , g-h & & \\
  \end{align*}
\end{lemma}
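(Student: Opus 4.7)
The plan is to realize the map $k_h$ by an explicit test family and compute by intersection theory on its total space. The naive choice is the constant family $X = C_1 \times (C_1 \union C_2) \to C_1$ with section $\sg(p) = (p,p) \in C_1 \times C_1 \subset X$, but this section meets the node locus $C_1 \times \{p_1\}$ of $X$ at $(p_1,p_1)$, so the family is not stable over $p_1$. After normalizing $X$ along the node, I would blow up the $C_1$-branch $C_1 \times C_1$ at $(p_1,p_1)$ and re-glue. The resulting stabilized family $\pi\colon\widetilde{X} \to C_1$ has an exceptional $\Proj^1$, call it $E$, inserted into the fiber over $p_1$, so that the special fiber is a chain $F_1 \union E \union F_2 \isom C_1 \union \Proj^1 \union C_2$, with the marked section meeting $E$ at a point distinct from its two nodes.

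For $\lam$, I would observe that $\pi_*\can_{\widetilde{X}/C_1}$ is a trivial rank-$g$ bundle: the stabilization contracting $E$ induces an isomorphism on $\coh{0}{\can}$ of the special fiber with $\coh{0}{\can_{C_1 \union C_2}}$, and away from $p_1$ the underlying curve is constant. Hence $\deg k_h^*\lam = 0$. For $\psi$, the adjunction identity $\psi = -c_1(N_{\sg(C_1)/\widetilde{X}})$ reduces the computation to the self-intersection of the proper transform $\widetilde{\De}$ of the diagonal $\De \subset C_1 \times C_1$ in the blow-up $\widetilde{Y} := \Bl_{(p_1,p_1)}(C_1 \times C_1)$. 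Since $\De^2 = 2-2h$ on $C_1 \times C_1$ and $\De$ passes through the blow-up center with multiplicity one, $\widetilde{\De}{}^2 = (2-2h) - 1 = 1 - 2h$ and $\deg k_h^*\psi = 2h - 1$.

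For the boundary divisors, the generic fiber $(C_1 \union C_2, p)$ lies in $\de_h$ and in no other boundary divisor; the special fiber $F_1 \union E \union F_2$ additionally lies in $\de_{g-h}$ (coming from the node between $F_1$ and $E$), but in no others. This immediately gives $\deg k_h^*\dn = 0$ and $\deg k_h^*\di = 0$ for $i \ne h, g-h$. The intersection with $\de_{g-h}$ is supported at $p_1$ and concentrated at the $F_1$--$E$ node; because $\widetilde{X}$ is smooth there and the local model is $xy = t$, the multiplicity is one, giving $\deg k_h^*\de_{g-h} = 1$. For $\de_h$, I would factor $k_h$ through the gluing morphism $\xi_h\colon\Mb_{h,2} \times \Mb_{g-h,1} \incl \Mgot$ via the constant map on the $\Mb_{g-h,1}$-factor and a stabilized map $\beta_1\colon C_1 \to \Mb_{h,2}$, $p \mapsto (C_1, p, p_1)$. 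The standard formula $\xi_h^*\de_h = -\psi_2 - \psi_3$ reduces the problem to computing $\deg\beta_1^*\psi_2$, which by the same adjunction argument equals minus the self-intersection of the proper transform of $C_1 \times \{p_1\}$ in $\widetilde{Y}$; this self-intersection is $-1$, so $\deg\beta_1^*\psi_2 = 1$ and $\deg k_h^*\de_h = -1$.

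The main technical hurdle is setting up the stabilization carefully enough that the identification of each node of the special fiber with a specific boundary divisor is unambiguous; once that geometric picture is in place, the numerical claims all reduce to two self-intersection computations on the blow-up and standard properties of $\lam$ and $\psi$.
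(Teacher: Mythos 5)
Your argument is correct, and it supplies exactly the computation the paper omits (the paper declares this lemma ``straightforward'' and gives no proof): realizing $k_h$ by the constant family $C_1\times(C_1\cup C_2)$ with the diagonal section, stabilizing by blowing up $(p_1,p_1)$ on the $C_1$-branch, and reading off $\psi$ and the boundary multiplicities from the self-intersections $\widetilde{\De}{}^2=1-2h$ and $\widetilde{(C_1\times\{p_1\})}{}^2=-1$ together with the excess-intersection formula $\xi_h^*\de_h=-\psi'-\psi''$. All signs and identifications of the two nodes of the special fiber with $\de_h$ and $\de_{g-h}$ are consistent with the paper's convention that $\di$ carries the marked point on the genus-$i$ component.
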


\begin{proof}[Proof of Theorem \ref{main-thm}]
  Theorem \ref{main-thm} is now a consequence of the above lemmas.
  The main point is that the pull-backs of the classes
  $\pf\ac$, $\pf\bc$, and $\pf\cc$ to our special families coincide with the
  classes computed in Section~\ref{sec:comp-spec-famil}. For example,
  to see this for $j^*\pf\cc$, form the fiber the fiber square
  \begin{equation*}
    \begin{diagram}
      j^*\sgrd&\rTo^{j'}& \sgrd \\
      \dTo<{\eta'}& &\dTo>{\eta} \\
      \Mttl& \rTo_j& \Mgot\text{.}
    \end{diagram}
  \end{equation*}
  Notice that although $j$ is a regular embedding, $j'$ need not be.
  Nonetheless, according to Fulton \cite[Chapter 6]{Fulton.book}, there
  is a refined Gysin homomorphism
  \begin{equation*}
    j^! \colon A_k(\sgrd) \to A_{k-l}(j^*\sgrd)\text{,}
  \end{equation*}
  where $l$ is the codimension of $j$, which commutes with push-forward:
  \begin{equation*}
    \eta'_* j^! = j^*\pf\text{.}
  \end{equation*}
  We need to check that
  \begin{equation*}
    j^! c_1(\sV)\cap [\sgrd]
    = c_1({j'}^*\sV) \cap [j^*\sgrd]\text{.}
  \end{equation*}
  Since  
  \begin{equation*}
    j^! c_1(\sV)\cap [\sgrd]
    = c_1({j'}^*\sV) \cap j^![\sgrd]
  \end{equation*}
  \cite[Proposition~6.3]{Fulton.book}, it is enough to check that
  \begin{equation*}
    j^![\sgrd] = [j^*\sgrd]\text{.}
  \end{equation*}
  Generalizing the dimension upper bound in
  \cite[Corollary~5.9]{Osserman2} to the multi-component case 
  \cite{limit.linear.series}, we obtain
  \begin{equation*}
    \dim j^*\sgrd = \dim \Mttl \text{.}
  \end{equation*}
  This implies that the codimension of
  $j'$ is equal to that of $j$, so the normal cone of $j'$ is equal to
  the pull-back of the normal bundle of $j$, and the result follows.

  Now, for  example, to compute $\pf\cc$, write
  \begin{equation*}
    \pf\cc = a\lam - \sum_{i=0}^{g-1} b_i\di + c\psi
  \end{equation*}
  Our goal is to solve for $a,b_0,b_1,\dotsc,b_{g-1},c$. Using Lemmas
  \ref{lem:eta-marked-point} and \ref{pull-back-marked-point}, we may
  solve for $c$ and write $b_{g-i}$ in terms of $b_i$. From Lemmas
    \ref{lem:eta-mogb} and \ref{lem:pull-back-mogb}, we may further
    solve for $b_1,b_2,\dotsc,b_{g-2}$ in terms of $b_{g-1}$. It
    remains to determine $a$, $b_0$, and $b_{g-1}$. This is done by
    pulling back to $\Mttl$, which has Picard number 3, and using
    Lemmas \ref{lem:eta-M21} and \ref{lem:pull-back-M21}. The other
    push-forwards are computed similarly.
\end{proof}

\section{Proofs of Lemmas}
\label{sec:proofs-lemmas}

In this section, we give proofs of the lemmas stated in Sections
\ref{sec:comp-spec-famil} and \ref{sec:pull-back-maps}.

\begin{proof}[Proof of Lemma \ref{lem:eta-mogb}]
  \label{pf:eta-mogb}
  If $\cb_{0,g} \to \Mogb$ is the universal stable curve, then $i^*\Ctl$ is
  formed by attaching $\Mogb \times E_i$ to $\cb_{0,g}$ along the marked
  sections $\sigma_i \colon \Mogb \to \cb_{0,g}$. We have the
  following fiber square.
  \begin{equation*}
    \begin{diagram}
      i^*\scrd &\rTo& i^*\Ctl \\
      \dTo& &\dTo \\
      i^*\sgrd& \rTo& \Mogb
    \end{diagram}
  \end{equation*}

  By the Pl\"ucker formula for $\Proj^1$, given $[C]\in \Mogb$, a limit
  linear series on $i(C)$ must have the aspect
  \begin{equation*}
    (d-r-1)p_i + \ls{(r+1)p_i}
  \end{equation*}
  on each $E_i$. The line bundle $\sL \to i^*\scrd$ is, therefore,
  the pull-back from $i^*\Ctl$ of the bundle which is
  given by
  \begin{equation*}
    \pi_2^* \oh{E_1}{dp}
  \end{equation*}
  on $\Mogb \times E_1$ and is trivial on all other components. Thus
  $\ac=\bc=0$. The vector bundle $\sV \subset \pls \sL$ is trivial with
  fiber isomorphic to
  \begin{equation*}
    \coh{0}{\oh{E_1}{(r+1)p}}  % + (d-r-1)p
    \subset
    \coh{0}{\oh{E_1}{dp}}
  \end{equation*}
  so $\cc=0$ as well.
\end{proof}

Before proving Lemma \ref{lem:eta-M21} we state an elementary result
in Schubert calculus.

\begin{lemma}\cite[p. 266]{GriffithsHarrisBN}
  \label{lem:schubert}
  For integers $r$ and $d$ with $0\le r\le d$, let
  \begin{equation*}
    X = \Grass(r,\Proj^d)
  \end{equation*}
  be the Grassmannian of $r$-planes in $\Proj^d$. For integers
  \begin{equation*}
    0 \le b_0 \le b_1 \le \dotsb \le b_r \le d-r \text{,}
  \end{equation*}
  let $\sch{b}=\sch{b_r,\dotsc,b_0}$ be the corresponding Schubert cycle of
  codimension $\sum b_i$. Let $\zeta = \sch{1,1,\dotsc,1,0}$ be the
  special Schubert cycle of codimension $r$.
  If $k$ is an integer for which
  \begin{equation*}
    rk + \sum_{i=0}^r b_i  = \dim X = (r+1)(d-r) \text{,}
  \end{equation*}
  then
  \begin{equation*}
    \int_X \zeta^k \cdot \sch{b}
    = \frac{k!}{\prod_{i=0}^r (k-d+r + a_i)! }
    \prod_{0\le i<j\le r}(a_j - a_i) \text{,}
  \end{equation*}
  where $a_i = b_i + i$.
\end{lemma}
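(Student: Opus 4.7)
The plan is to derive this by combining the Jacobi--Trudi/Giambelli determinantal identity with repeated application of Pieri's rule. First I would identify the Schubert class $\sch{b_r,\dotsc,b_0}$ with a Schur polynomial in the Chern roots of the tautological subbundle on $X$; under this dictionary the special class $\zeta = \sch{1,\dotsc,1,0}$ corresponds to the elementary symmetric polynomial $e_r$, i.e., the top Chern class of the dual of the tautological subbundle.

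Next, I would expand $\zeta^k \cdot \sch{b}$ by repeated Pieri multiplication: each multiplication by $\zeta$ sends $\sigma_\mu$ to the sum of $\sigma_\nu$ obtained from $\mu$ by adding a vertical $r$-strip. After $k$ iterations we obtain a nonnegative integer combination of top-codimensional Schubert classes, each of which pairs to $1$ against the fundamental class of $X$. Thus $\int_X \zeta^k \cdot \sch{b}$ equals the number of chains
\[
\lambda = \mu^{(0)} \subset \mu^{(1)} \subset \dotsb \subset \mu^{(k)} = ((d-r)^{r+1})
\]
of partitions inside the $(r+1)\times(d-r)$ rectangle, each step adding a vertical $r$-strip, where $\lambda$ is the partition attached to the sequence $(b_r,\dotsc,b_0)$. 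The dimensional hypothesis $rk + \sum b_i = (r+1)(d-r)$ is exactly what ensures that this top-codimensional expansion is non-trivial.

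This count is an instance of the Weyl dimension formula for $GL_{r+1}$: substituting $a_i = b_i + i$ produces the Vandermonde factor $\prod_{i<j}(a_j - a_i)$ in the numerator, the factorials $(k - d + r + a_i)!$ in the denominator, and an overall $k!$ from the ordered enumeration of the chain. The main obstacle is the combinatorial bookkeeping in translating among the three index conventions --- the Schubert sequence $(b_i)$, the partition $\lambda$, and the shifted weights $a_i$ --- and verifying that the Weyl formula specializes correctly after accounting for the rectangle constraint. Since the identity is classical and cited directly from \cite[p.~266]{GriffithsHarrisBN}, the verification reduces to matching conventions and invoking the standard result.
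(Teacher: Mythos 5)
First, a point of comparison: the paper does not prove Lemma~\ref{lem:schubert} at all --- it is quoted from \cite[p.~266]{GriffithsHarrisBN} as a classical fact --- so your sketch is measured against the classical argument rather than anything in the text. Your reduction is correct and standard: $\zeta=\sch{1,1,\dotsc,1,0}$ is $c_r$ of the dual tautological subbundle (not its \emph{top} Chern class --- the subbundle has rank $r+1$, a small slip), multiplication by it is the dual Pieri rule, and $\int_X\zeta^k\cdot\sch{b}$ is indeed the number of chains of partitions from $(b_r,\dotsc,b_0)$ to the full $(r+1)\times(d-r)$ rectangle in which each step adds a vertical $r$-strip.

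The gap is in the final step, which is where the entire content of the lemma lies. That chain count is not an instance of the Weyl dimension formula for $GL_{r+1}$: the Weyl formula counts semistandard tableaux of a fixed shape with unrestricted content, whereas here every step adds a strip of fixed size $r$, so the count is a ballot-type number (after complementing and conjugating, a Kostka number with rectangular content $(r^k)$). The right-hand side of the lemma is in fact the Frobenius--Young formula for the number of standard Young tableaux of the shape with parts $k-(d-r)+b_i$, i.e.\ the dimension of an irreducible representation of the symmetric group $S_k$, not of $GL_{r+1}$, and the claim that ``the $k!$ comes from the ordered enumeration of the chain'' assumes exactly what must be proved. To close the argument you would need either an explicit bijection between these strip-chains and standard tableaux, or the equivalent reflection-principle/Lindstr\"om--Gessel--Viennot determinant evaluation (or an induction on $k$ in the style of Castelnuovo); alternatively, drop the sketch and simply cite \cite{GriffithsHarrisBN}, as the paper does. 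As written, the decisive identity is misattributed and not derived.
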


\begin{proof}[Proof of Lemma \ref{lem:eta-M21}]
  \label{pf:eta-M21}
  Since $\Mb_{2,1}$ is a smooth Deligne-Mumford stack, it is enough,
  by the moving lemma, to
  prove Lemma \ref{lem:eta-M21} for a family over a
  %smooth
  complete curve
  \begin{equation*}
    B \incl \Mttl
  \end{equation*}
  which intersects the boundary and Weierstrass divisors
  transversally.  If $\pi\colon\sC \to B$ is the universal stable
  genus-2 curve and $\sg\colon B\to \sC$ is the marked section, then
  $j^*\widetilde\sC_{g,1}$ is formed by attaching $\sC$ to $B \times
  C'$ along the marked section $\Sg=\sg(B) \subset \sC$.

  We begin by assuming that $B$ is disjoint from the closure of
  the Weierstrass locus $W$. In this case we claim that
  \begin{equation*}
    j^* \sgrd \to  B
  \end{equation*}
  is a trivial $N$-sheeted cover of the form $B\times X$, where is $X$
  a zero-dimensional scheme of length $N$.  Indeed, for any curve
  $(C,p)$ in $\Mttl \setminus W$ there are two (limit linear) $\grd$s
  on $C$ with maximum ramification at $p$; the vanishing sequences are
  \begin{align*}
    a_1 &= (d-r-2,d-r-1, \dotsc, d-4,d-3,d) \text{,} \\
    a_2 &= (d-r-2,d-r-1, \dotsc, d-4, d-2, d-1) \text{.}
  \end{align*}
  If $C$ is smooth, the two linear series are
  \begin{equation*}
    (d-r-2)p + \ls{(r+2)p}
  \end{equation*}
  and
  \begin{equation*}
    (d-r-2)p + \ls{rp + K_C} \text{.}
  \end{equation*}
  There are analogous series on nodal curves outside the closure of
  the Weierstrass locus. In the case of irreducible nodal curves, the
  sheaves are locally free.

  For each of the two $\grd$s on $C$ with maximum ramification at $p$,
  there are finitely many $\grd$s on $C'$ with compatible
  ramification. Specifically, there are
  \begin{equation*}
    \frac{(2g-2-d)N}{2(g-1)}
  \end{equation*}
  of type $a_1$ and
  \begin{equation*}
    \frac{dN}{2(g-1)} 
  \end{equation*}
  of type $a_2$, for total of $N$ limit linear series counted with
  multiplicity. Since $C'$ is
  fixed, the cover $j^*\sgrd \to B$ is a trivial $N$-sheeted cover.

  Consider a reduced sheet $B_1 \caniso B$ of type $a_1$. (We assume for
  simplicity that the sheet is reduced---the computation is the same
  in the general case.)  Then the universal
  line bundle $\sL$ on $j^*\sC^r_d$ is given as
  \begin{equation*}
    \sL \isom
    \begin{cases}
      \mathcal{O}_{\sC} & \text{on $\sC$} \\
      \pi_2^*L_1  & \text{on $B_1 \times C'$}
    \end{cases}
  \end{equation*}
  for some line bundle $L_1$ on $C'$ of degree $d$. It follows that
  $\ac=\bc=\cc=0$ on $B_1$.

  Next consider a sheet $B_2\caniso B$ of type $a_2$. Over $B_2 \times
  C'$ the universal line bundle $\sL$ is isomorphic to $\pi_2^* L_2$
  for some $L_2$ of degree $d$ on $C'$. It remains to determine $\sL$
  over $\sC$.
  Now $\omega_C(-2p)$ gives the correct line bundle for all $[C]\in
  B_2$; however, it has the wrong degrees on the components of the
  singular fibers. As our first approximation to $\sL$ on $\sC$ we take
  \begin{equation*}
    \omega_{\sC / B_2}( -2 \Sg)
  \end{equation*}
  Let $\De\subset\sC$ be the pull-back of
  the divisor on $\sC$ of curves of the form $C_1 \union C_2$, where
  the $C_i$ have genus one, and the marked points lie on different
  components. Then
  \begin{equation*}
    \omega_{\sC / B_2}( -2 \Sg + \De)
  \end{equation*}
  has the correct degree on the irreducible components on each
  fiber. It remains only to normalize our line bundle by pull-backs
  from the base $B_2$. In this case, $\sL|_\sC$ is required to be trivial
  along $\Sg$ since $\sL$ is a pull-back from $C'$ on the other
  component. If $\sg\colon B_2 \to \sC$  is the marked section, we
  let
  \begin{equation*}
    \Psi = \sg^* \omega_{\sC / B_2}
  \end{equation*}
  be the tautological line bundle on $B_2$. Then
  \begin{align*}
    \sg^* \oh{\sC}{\De} & \isom \mathcal{O}_{B_2} \\
    \sg^*\oh{\sC}{\Sg} & \isom \dual\Psi
  \end{align*}
  It follows that on $\sC$,
  \begin{equation*}
    \sL \isom
    \begin{cases}
      \omega_{\sC / B_2}( -2 \Sg + \De) \tensor \pi^*\Psi^{\tensor -3}
      &\text{on $\sC$} \\
      \pi_2^* L_2 &\text{on $B_2\times C'$.}
    \end{cases}
  \end{equation*}
  Thus, if we let
  \begin{align*}
    \omega &= c_1(\omega_{\sC / B_2}) \\
    \sg &= c_1 (\oh{\sC}{\Sg}) \\
    \de &= c_1 (\oh{\sC}{\De}) 
  \end{align*}
  on $\sC$ and let
  \begin{equation*}
    \psi = c_1(\Psi)
  \end{equation*}
  on $B_2$, then
  \begin{equation*}
    c_1(\sL) =
    \begin{cases}
      \omega - 2\sg + \de - 3\pi^*\psi  & \text{on $\sC$} \\
      d\pi_2^* p  & \text{on $B_2 \times C'$}
    \end{cases}
  \end{equation*}
  For the relative dualizing sheaf $\omega_{j^*\widetilde\sC_{g,1} /
    B_2}$, we have
  \begin{equation*}
    c_1(\omega_{j^*\sC / B_2}) =
    \begin{cases}
      \omega + \sg & \text{on $\sC$} \\
      (2(g-2) - 1)\pi_2^* p & \text{on $B_2 \times C'$.}
    \end{cases}
  \end{equation*}
  To compute the products of these classes on $j^*\sC^r_d$, recall the
  following formulas on $\pi\colon \overline{\sC}_{2,1} \to \Mb_{2,1}$
  \begin{align*}
    \pls \omega & =2 &
    \pls \delta & = 0 &
    \pls \sigma & = 1 \\
    \pls \omega^2 & = 12\lam - \dn - \de_1 &
    \pls\sg^2 & = -\psi &
    \pls\de^2 &= -\de_1 \\
    \pls (\de . \sg) & = 0 &
    \pls (\omega . \de) & = \de_1 &
    \pls (\sg . \omega) & = \psi \text{.}
  \end{align*}
  Then we compute
  \begin{align*}
    \ac &= \pls\bigl[c_1(\sL)^2\bigr] = 12\lam - \dn - 8\psi \\
    \bc &= \pls\bigl[c_1(\sL) \cdot c_1(\omega)\bigr] = 12\lam - \dn - 8\psi
  \end{align*}
  on $B_2$.
  Since the marked point lies on
  $C'$, $\sV$ is trivial on $B_2$, so $\cc=0$ on $B_2$.

  \begin{figure*}
    \unitlength 1mm
\begin{picture}(68.38,56.35)(0,0)
\linethickness{0.3mm}
\qbezier(57.57,53.51)(55.24,47.78)(55.71,43.62)
\qbezier(55.71,43.62)(56.18,39.45)(58.11,35.14)
\qbezier(58.11,35.14)(59.89,30.84)(59.56,28.14)
\qbezier(59.56,28.14)(59.24,25.43)(57.36,22.3)
\linethickness{0.3mm}
\multiput(17.97,33.38)(0.12,0.45){43}{\line(0,1){0.45}}
\linethickness{0.3mm}
\multiput(17.43,42.84)(0.12,-0.39){54}{\line(0,-1){0.39}}
\linethickness{0.3mm}
\qbezier(47.43,52.03)(43.13,32.56)(39.33,35.44)
\qbezier(39.33,35.44)(35.53,38.32)(40.14,39.32)
\qbezier(40.14,39.32)(45.18,37.85)(46.17,31.78)
\qbezier(46.17,31.78)(47.17,25.71)(46.89,20.54)
\linethickness{0.3mm}
\qbezier(2.03,2.97)(12.44,6.23)(15.96,6.84)
\qbezier(15.96,6.84)(19.49,7.44)(32.65,3.92)
\qbezier(32.65,3.92)(47.82,1.03)(56.64,3.41)
\qbezier(56.64,3.41)(65.46,5.79)(60.42,4.32)
\linethickness{0.3mm}
\qbezier(2.43,24.73)(12.84,27.99)(16.37,28.6)
\qbezier(16.37,28.6)(19.89,29.2)(33.06,25.68)
\qbezier(33.06,25.68)(48.23,22.79)(57.05,25.17)
\qbezier(57.05,25.17)(65.86,27.55)(60.82,26.08)
\linethickness{0.3mm}
\qbezier(51.86,52.46)(49.54,46.72)(50,42.56)
\qbezier(50,42.56)(50.47,38.4)(52.41,34.08)
\qbezier(52.41,34.08)(54.19,29.79)(53.86,27.08)
\qbezier(53.86,27.08)(53.53,24.37)(51.66,21.24)
\linethickness{0.3mm}
\qbezier(34.95,53.03)(32.62,47.29)(33.09,43.13)
\qbezier(33.09,43.13)(33.55,38.96)(35.49,34.65)
\qbezier(35.49,34.65)(37.27,30.35)(36.94,27.64)
\qbezier(36.94,27.64)(36.62,24.94)(34.74,21.81)
\linethickness{0.3mm}
\qbezier(28.7,52.78)(26.38,47.05)(26.84,42.89)
\qbezier(26.84,42.89)(27.31,38.72)(29.24,34.41)
\qbezier(29.24,34.41)(31.02,30.11)(30.7,27.4)
\qbezier(30.7,27.4)(30.37,24.7)(28.5,21.57)
\linethickness{0.3mm}
\qbezier(13.68,51.73)(11.35,45.99)(11.82,41.83)
\qbezier(11.82,41.83)(12.28,37.67)(14.22,33.35)
\qbezier(14.22,33.35)(16,29.06)(15.67,26.35)
\qbezier(15.67,26.35)(15.35,23.64)(13.47,20.51)
\linethickness{0.3mm}
\qbezier(6.76,51.76)(4.43,46.02)(4.9,41.86)
\qbezier(4.9,41.86)(5.36,37.69)(7.3,33.38)
\qbezier(7.3,33.38)(9.08,29.08)(8.75,26.37)
\qbezier(8.75,26.37)(8.43,23.67)(6.55,20.54)
\put(23.11,56.35){\makebox(0,0)[cc]{$\Delta$}}

\put(66.89,26.62){\makebox(0,0)[cc]{$\Sigma$}}

\put(63.65,56.35){\makebox(0,0)[cc]{$\sC$}}

\put(67.7,5){\makebox(0,0)[cc]{$B_2$}}

\linethickness{0.3mm}
\put(40.68,7.16){\line(0,1){12.43}}
\put(40.68,7.16){\vector(0,-1){0.12}}
\end{picture}
    \caption{The morphism $\sC\to B_2$.}
  \end{figure*}

  Finally we consider the case where $B$ (transversally) intersects
  the Weierstrass locus. In this case
  \begin{equation*}
    \eta\colon j^*\sgrd \to B
  \end{equation*}
  is the union of a trivial $N$-sheeted cover of $B$ and a 
  1-dimensional scheme lying over each point of the divisor $W$. It
  will suffice to compute $\ac$ and $\cc$ on
  \begin{equation*}
    \Grd ( j[C,p])
  \end{equation*}
  where $C$ is a smooth genus-2 curve, and $p\in C$ is a Weierstrass
  point. (Note that $\bc$ is automatically zero.)

%   Again, by an
%   appropriate generalization of \cite[Proposition 6.6]{Osserman} to
%   the multi-component case \cite{limit.linear.series}, the scheme
%   structure on the open locus of refined series coincides with the
%   natural subscheme structure inside the product of Grassmannians.

  There is a single
  $\grd$ on $C$ with maximal ramification at $p$, namely
  \begin{equation*}
    (d-r-2)p + \ls{(r+2)p}\text{,}
  \end{equation*}
  which has vanishing sequence
  \begin{equation*}
    (d-r-2,d-r-1,\dotsc,d-4,d-2,d)\text{.}
  \end{equation*}
  We claim that we only need to consider components of $\Grd(C\union
  C')$ with this aspect on $C$. Indeed, any $\grd$ on $C$ with
  ramification 1 less at $p$ is still a subseries of $\ls{dp}$. There
  will be finitely many corresponding aspects on $C'$ so that, as
  before, $\ac=\bc=\cc=0$ on these components.

  It remains to consider the components of $\Grd(C\union C')$ where
  the aspect on $C'$ has ramification $(0,1,2,2,\dotsc,2)$ or more at
  $p'$. We are reduced to studying the one-dimensional scheme
  \begin{equation*}
    S=\Grd (C'; p',(0,1,2,2,\dotsc,2)) \text{.}
  \end{equation*}
  To simplify computations we specialize $C'$ to a curve which is the
  union of $\Proj^1$ with $g-2$ elliptic curves $E_1,\dotsc,E_{g-2}$
  attached at general points $p_1,\dotsc,p_{g-2}$,
  and where the marked point $p_0$ lies on $E_1$, and the point of attachment
  $p'$ lies on the $\Proj^1$.
  \begin{figure*}[h]
    \unitlength 1mm
\begin{picture}(42,28)(0,0)
\linethickness{0.3mm}
\put(4,4){\line(1,0){34}}
\linethickness{0.3mm}
\qbezier(10,24)(8.58,19.8)(8.86,16.76)
\qbezier(8.86,16.76)(9.15,13.71)(10.33,10.55)
\qbezier(10.33,10.55)(11.41,7.41)(11.21,5.43)
\qbezier(11.21,5.43)(11.02,3.45)(9.87,1.16)
\linethickness{0.3mm}
\qbezier(18,24)(16.58,19.8)(16.86,16.76)
\qbezier(16.86,16.76)(17.15,13.71)(18.33,10.55)
\qbezier(18.33,10.55)(19.41,7.41)(19.21,5.43)
\qbezier(19.21,5.43)(19.02,3.45)(17.87,1.16)
\linethickness{0.3mm}
\qbezier(32,24)(30.58,19.8)(30.86,16.76)
\qbezier(30.86,16.76)(31.15,13.71)(32.33,10.55)
\qbezier(32.33,10.55)(33.41,7.41)(33.21,5.43)
\qbezier(33.21,5.43)(33.02,3.45)(31.87,1.16)
\put(10,28){\makebox(0,0)[cc]{$E_1$}}

\put(18,28){\makebox(0,0)[cc]{$E_2$}}

\put(34,28){\makebox(0,0)[cc]{$E_{g-2}$}}

\put(24,16){\makebox(0,0)[cc]{$\dotsb$}}

\put(6,4){\makebox(0,0)[cc]{$\bullet$}}

\put(6,8){\makebox(0,0)[cc]{$p'$}}

\put(9.39,20){\makebox(0,0)[cc]{$\bullet$}}

\put(6,20){\makebox(0,0)[cc]{$p_0$}}

\put(42,4){\makebox(0,0)[cc]{$\Proj^1$}}

\put(26,28){\makebox(0,0)[cc]{$\dotsb$}}

\end{picture}
    \caption{The curve $C'$.}
  \end{figure*}
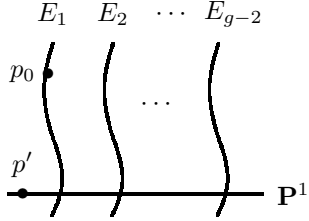
  There will be two types of components of $S$: those on which the
  aspects on the $E_i$ are maximally ramified at $p_i$, and those on
  which the aspect on one $E_i$ varies. Again, as in the proof of
  Lemma \ref{lem:eta-mogb}, we need only consider the latter case.

  Assume that for some $i$, the ramification at $p_i$ of the $\grd$ on
  $E_i$ is one less than maximal. There are two possibilities: either
  the series is of the form
  \begin{equation*}
    (d-r-1)p_i + \ls{rp_i+q} \qquad\text{for $q\in E_i$,}
  \end{equation*}
  which for $q\ne p_i$ imposes on the $\Proj^1$ the ramification condition
  \begin{equation*}
    (1,1,\dotsc,1) \text{,}
  \end{equation*}
  or the $\grd$ is a subseries of
  \begin{equation*}
    (d-r-2)p_i + \ls{(r+2)p_i}
  \end{equation*}
  containing
  \begin{equation*}
    (d-r)p_i + \ls{rp_i}\text{,}
  \end{equation*}
  which generically imposes on the $\Proj^1$ the ramification condition
  \begin{equation*}
    (0,1,1,\dotsc,1,2) \text{.}
  \end{equation*}
  In the first case the components are parameterized by $E_i$, and we
  compute that $\ac=-2$ on each such irreducible
  component, irrespective of whether $i=1$ or not. By
  Grothendieck-Riemann-Roch, $\cc=-1$ when $i=1$ and is zero otherwise. In
  the second case the $\grd$s are parameterized by a $\Proj^1$. Because
  the line bundle is constant, $\ac=0$. On each such $\Proj^1$,
  the vector bundle $\sV$
  may be viewed as the tautological bundle of rank $r+1$ on the
  Grassmannian of vector subspaces of a fixed vector space of
  dimension $r+2$ containing a subspace of dimension $r$. It follows
  that $\cc=-1$ on each $\Proj^1$.

  Let $X=\Grass(r,\Proj^d)$ be the Grassmannian of $r$-planes in $\Proj^d$. Let
  \begin{equation*}
    \zeta = \sch{1,1,\dotsc,1,0}
  \end{equation*}
  be the special Schubert cycle of codimension $r$.
%   \begin{align*}
%     \zeta_1 &= \sch{1,1,\dotsc,1} \\
%     \zeta_2 &= \sch{3,2,2,\dotsc,2,1} \\
%     \zeta_3 &= \sch{3,3,\dotsc,3,2,1} 
%   \end{align*}
%   be the Schubert classes of degrees $r$, $2r$, and $3r$ respectively.
  Collecting our calculations, we have that on $\Grd(C\union C')$,
  \begin{align*}
    \ac &= -2(g-2)\int_X
    \sch{2,2,\dotsc,2,1,0}\cdot\sch{1,1,\dotsc,1}\cdot\zeta^{g-3} \\ 
    &= -2(g-2)\int_X
    \sch{3,3,\dotsc,3,2,1}\cdot\zeta^{g-3} \text{,}
%     \bc &=0 \\
  \end{align*}
  and
  \begin{align*}
    \cc &= -\int_X \sch{2,2,\dotsc,2,1,0}\cdot
    (\sch{1,1,\dotsc,1} + \sch{2,1,1,\dotsc,1,0})\cdot
    \zeta^{g-3} \\
    &= -\int_X \sch{2,2,\dotsc,2,1,0}\cdot
    (\sch{1,0,0,\dotsc,0} \cdot \zeta)\cdot
    \zeta^{g-3} \\
    &= -\int_X
    (\sch{3,2,2,\dotsc,2,1,0}+\sch{2,2,\dotsc,2,0}+\sch{2,2,\dotsc,2,1,1})
    \cdot     \zeta^{g-2} \\ 
    &= -\int_X
    (\sch{3,2,2,\dotsc,2,1,0}+ \zeta^2)
    \cdot     \zeta^{g-2} \\ 
    &= -\int_X
    \sch{3,2,2,\dotsc,2,1,0} \cdot \zeta^{g-2}
    -\int_X \zeta^g \text{.}
  \end{align*}
  From Lemma \ref{lem:schubert} we compute,
  \begin{align*}
    \ac &= \frac{-2d(2g-2-d)N}{3(g-1)} \\
    \cc &= \frac{-\xi N}{3(g-1)} \text{.}
  \end{align*}
  Since the class of the Weierstrass locus in $\Mttl$ is $3\psi - \lam
  -\de_1$, the lemma follows.
\end{proof}

\begin{proof}[Proof of Lemma \ref{lem:eta-marked-point}]
  \label{pf:eta-marked-point}
  Because the curves $(C_i,p_i)$ are Brill-Noether general, $k_h^*
  \sgrd$ is a trivial $N$-sheeted cover of $C_1$ of the form
  $C_1\times X$, where $X$ is a zero-dimensional scheme of length $N$.
  Fix a sheet $G\isom C_1$ in $k_h^*\sgrd$; this choice corresponds to
  aspects
  \begin{equation*}
    V_i \subset \coh{0}{C_i, L_i}
  \end{equation*}
  where $L_i$ are degree-$d$ line bundles on $C_i$.
  If $(a_0,a_1,\dotsc,a_r)$ is the vanishing sequence of $V_1$ at
  $p_1$, then we know that
  \begin{align*}
    0 &= \rho(h,r,d) - \sum_{i=0}^r (a_i-i) \\
      &= (r+1)(d-r) -hr  - \sum_{i=0}^r a_i + \frac{1}{2}r(r+1)\text{,}
  \end{align*}
  so
  \begin{equation}
    \label{sum.a_i}
        \sum_{i=0}^r a_i = (r+1)d - \frac{1}{2}r(r+1) - hr \text{.}
  \end{equation}
  Let $\sC_1$ be the blow-up of $C_1 \times C_1$ at $(p_1,p_1)$, $E$
  the exceptional divisor, and $e$ its first Chern class. We may
  construct the universal curve $k_h^*\widetilde\sC_{g,1} \to C_1$ by
  attaching $C_1\times C_2$ to $\sC_1$ along $C_1 \times \{p_2\}$ and
  the proper transform of $C_1\times \{p_1\}$.  Over the sheet $G$,
  the universal line bundle $\sL$ on $k_h^*\sC^r_d$ is
  \begin{equation*}
    \pi_2^*L_1 \tensor \oh{\sC_1}{-dE}
    \tensor \pi_1^* \dual{L}_1(dp_1)
  \end{equation*}
  on $\sC_1$ and
  \begin{equation*}
    \pi_2^* L_2 (-dp_2) \tensor \pi_1^* \dual{L}_1
  \end{equation*}
  on $C_1 \times C_2$. Thus
  \begin{equation*}
    c_1(\sL) =
    \begin{cases}
      d \pi_2^* p - de & \text{on $\sC_1$} \\
      - d \pi_1^* p & \text{on $C_1 \times C_2$} \text{.}
    \end{cases}
  \end{equation*}
  The relative dualizing sheaf $\omega_{k_h^*\widetilde\sC_{g,1} /
    C_1}$ is isomorphic to
  \begin{equation*}
     \pi_2^* \omega_{C_1}\tensor \oh{\sC}{E} \tensor \pi_1^* \oh{C_1}{-p_1}
  \end{equation*}
  on $\sC_1$ and
  \begin{equation*}
    \pi_2^* \omega_{C_2}(p_2)
  \end{equation*}
  on $C_1 \times C_2$. We have
  \begin{equation*}
    c_1(\omega) =
    \begin{cases}
      - \pi_1^* p + (2h-2)\pi_2^* p + e & \text{on $\sC_1$} \\
      (2(g-h) - 1)\pi_2^* p & \text{on $C_1 \times C_2$} \text{.}
    \end{cases}
  \end{equation*}
  Thus, on $G$,
  \begin{align*}
    \deg\ac &= c_1(\sL)^2 = -d^2 \\
    \deg\bc &= c_1(\sL) \cdot c_1(\omega) = -d\bigl[2(g-j) - 1\bigr] \text{.}
  \end{align*}
  The formulas for $\pf\ac$ and $\pf\bc$ now follow.

  To calculate $\cc$ on $G$, notice that it suffices to
  compute $c_1(\sV')$, where
  \begin{equation*}
    \sV' = \sV \tensor L_1 (-d p_1)
  \end{equation*}
  is a sub-bundle of
  \begin{equation*}
    {\pi_1}_* (\pi_2^*L_1 (-dE)) \text{.}
  \end{equation*}
%   pushed forward from $\sC_1$.
  We claim there is a bundle isomorphism
  \begin{equation*}
    \Ds_{j=0}^r\oh{C_1}{(a_j-d)p_1} \xrightarrow{\isom} \sV'
  \end{equation*}
  To describe the map, pick a basis $(\sg_0,\sg_1,\ldots,\sg_r)$ of
  $V_1\subset H^0(L_1)$ with $\sg_i$ vanishing to order $a_i$ at
  $p_1$. Given local sections $\tau_i$ of $\oh{C_1}{(a_i - d)p_1}$, let the
  image of $(\tau_0,\tau_1,\dotsc,\tau_r)$ be the section
  \begin{equation*}
    \sum_{i=0}^r \sg_i \tau_i
  \end{equation*}
  of $\sV'$. This is clearly an isomorphism away from $p_1$ and is
  checked to be an isomorphism over $p_1$ as well. 
  Using \eqref{sum.a_i}, we have that on $G$,
  \begin{equation*}
    \begin{split}
      \deg\cc = \deg\sV' &= \sum_{i=0}^r (a_i-d) \\
      % &= \sum a_j - d(r+1) \\
      &= -\frac{1}{2}r(r+1) - rh
    \end{split}
  \end{equation*}
  which finishes the proof of the lemma.
\end{proof}

% \begin{figure*}
  
%   \caption{$C_1\union C_2$}
% \end{figure*}

\begin{proof}[Proof of Lemma \ref{lem:pull-back-mogb}]
  \label{pf:pull-back-mogb}
  To prove the independence of the $\epsi$, consider the curves
  \begin{equation*}
    B_j \incl \Mogb
  \end{equation*}
  for $j=2,3,\dotsc,g-3$ given by taking a fixed stable curve in
  $\eps_j$ and moving a marked point on the component with $g-j$
  marked points. Let $B_1\incl\Mogb$ be the curve given by moving the
  first marked point along a fixed smooth curve.% $[C]\in\Mogb$.
  The intersection matrix
  \begin{equation*}
    (\epsi \cdot B_j)
  \end{equation*}
  is
  \begin{equation*}
    \begin{pmatrix}
      g-1 &0&0&0&\cdots&0&0&0 \\
      -1&1&0&0&\cdots&0&0&g-3 \\
      0&-1&1&0&\cdots&0&0&g-4 \\
      \vdots& & & &\ddots& & & \vdots \\
      0&0&0&0&\cdots&-1&1&3 \\
      0&0&0&0&\cdots&0&-1&2 
    \end{pmatrix}
  \end{equation*}
  where the rows correspond to the $B_j$ for $j=1,2,\dotsc,g-3$, and
  the columns to the $\epsi$ for $i=2,3,\dotsc,g-2$. Since this matrix
  is non-singular, the first part of the lemma follows.

  To derive the formula for the pull-back, we follow Harris and
  Morrison \cite[Section 6.F]{HarMor}. Let $B$ be a smooth projective
  curve, $\pi\colon\sC \to B$ a 1-parameter family of curves in
  $\Mogb$ transverse to the boundary strata. Then $\pi$ has smooth
  total space, and the fibers of $\pi$ have at most two irreducible
  components. Let
  \begin{equation*}
    \sg_i \colon B\to\sC
  \end{equation*}
  be the marked sections. Denote by $\Sg_i$ the image curve $\sg_i(B)$
  in $\sC$. Then on $B$,
  \begin{align*}
    \de_1 &= \Sg_1^2 \\
    \de_{g-1} &= \sum_{j=2}^g \Sg_j^2 \text{,}
  \end{align*}
  where we are using $D^2$ to denote $\pls(D^2)$ for a divisor $D$ on $\sC$.
  We now contract the component of each reducible fiber which meets the
  section $\Sg_1$. If $\Sgt_j$ is the image of $\Sg_j$ under this
  contraction, then we have
  \begin{align*}
    \Sg_1^2 &= \Sgt_1^2 - \sum_{i=2}^{g-2}\epsi \\
    \sum_{j=2}^g \Sg_j^2 &= \sum_{j=2}^g\Sgt^2_j - \sum_{i=2}^{g-2}(i-1)\epsi
  \end{align*}
  The $\Sgt_j$ are sections of a $\Proj^1$-bundle, so
  \begin{equation*}
    \begin{split}
      0 &= (\Sgt_j - \Sgt_k)^2 
        = \Sgt_j^2 + \Sgt_k^2 - 2\Sgt_j\cdot\Sgt_k
    \end{split}
  \end{equation*}
  Thus
  \begin{equation*}
    \begin{split}
      (g-2)\sum_{j=2}^g\Sgt_j^2
      &= \sum_{2\le j,k\le g} (\Sgt_j^2 + \Sgt_k^2) 
       = 2\sum_{2\le j,k\le g} \Sgt_j\cdot\Sgt_k \\
      &= 2\sum_{i=2}^{g-2}\binom{i-1}{2}\epsi \text{.}
    \end{split}
  \end{equation*}
  It follows that
  \begin{equation*}
    \begin{split}
      \de_{g-1}
      &=
      \sum_{i=2}^{g-2}\left[\frac{(i-1)(i-2)}{g-2}-(i-1)\right]\epsi
      = \sum_{i=2}^{g-2}\frac{(i-1)(i-g)}{g-2}\epsi
    \end{split}
  \end{equation*}
  Similarly, we can show that
  \begin{equation*}
    \de_1 + \de_{g-1}
    = \sum_{i=2}^{g-2}\frac{i(i-g)}{g-1}\epsi
  \end{equation*}
  so the formula for $\de_1$ follows as well.
\end{proof}

The proofs of Lemmas \ref{lem:pull-back-M21} and
\ref{pull-back-marked-point} are straightforward, so we omit them.

\section{Appendix}
\label{sec:appendix}

In this section, we prove that the locus in
Section~\ref{sec:syzygy-divisors} defined for $i=0$ and $s=2$ is, in
fact, a divisor in $\gsix(\M_{21})$. The case $s=1$ is similar, and
$s=0$ is clear.

We first establish the following result.

\begin{lemma}
  \label{lem:irred}
  The space $\gsix(\M_{21})$ is irreducible.
\end{lemma}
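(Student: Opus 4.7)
The plan is to exhibit a dense smooth open substack of $\gsix(\M_{21})$ whose connected components coincide with its irreducible components, and then to show that the monodromy of $\eta\colon\gsix\to\M_{21}$ acts transitively on a generic fiber. Combined with the irreducibility of $\M_{21}$, this will yield the irreducibility of $\gsix(\M_{21})$.

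First, since $\rho(21,6,24)=0$, the Gieseker-Petri theorem (\cite{Gieseker}, \cite{EHPetri}, \cite{Lazarsfeld}) guarantees that for a general $[C]\in\M_{21}$ the scheme $G^6_{24}(C)$ is reduced and zero-dimensional of length $N$. Hence there is a dense open substack $U\subset\M_{21}$ over which $\eta$ is finite étale and over which $\gsix$ is smooth. Because $\M_{21}$ is irreducible (Theorem \ref{thm:Mgnb}), every irreducible component of $\gsix$ must dominate $\M_{21}$, and the set of irreducible components is in bijection with the set of orbits of the monodromy representation $\pi_1(U,[C])\to S_N$ acting on the fiber $\eta^{-1}([C])$.

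Next, I would appeal to the Eisenbud-Harris monodromy theorem, which asserts that for triples $(g,r,d)$ with $\rho=0$ the monodromy group of $\Grd$ over $\Mg$ is the full symmetric group on a general fiber; in particular it acts transitively. If a more self-contained argument is preferred, one can specialize the universal curve to a flag curve (or to a chain of elliptic components as in \cite{EH.Brill-Noether}), where the limit $\grd$'s are indexed by a combinatorial set of admissible vanishing tableaux. Loops in $\Mgot$ around suitable boundary divisors then realize elementary transpositions of these tableaux, and one verifies that enough such transpositions are obtained to generate a transitive subgroup of $S_N$.

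The main obstacle is the monodromy step. For the specific numerics $(g,r,d)=(21,6,24)$ the combinatorics of admissible sequences is intricate, and a fully self-contained argument would require a careful bookkeeping of which Weierstrass-type specializations and boundary degenerations realize which transpositions on the fiber. Invoking the Eisenbud-Harris monodromy theorem circumvents this entirely, reducing the proof of the lemma to the two comparatively soft inputs (Gieseker-Petri and irreducibility of $\M_{21}$) already recorded above.
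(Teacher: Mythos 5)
Your reduction to a monodromy statement is fine as far as it goes (for general $[C]\in\M_{21}$, Gieseker--Petri makes $\eta$ finite and unramified, so components of $\gsix$ dominating $\M_{21}$ correspond to monodromy orbits), but the step you lean on --- an ``Eisenbud--Harris monodromy theorem'' asserting full symmetric (or even transitive) monodromy for \emph{all} $\rho=0$ triples $(g,r,d)$ --- does not exist in that generality. The Eisenbud--Harris irreducibility/monodromy results for $\rho=0$ concern pencils ($r=1$), and Harris's Galois-group computations cover only a few further special cases; for $r=6$, $d=24$, $g=21$ no such theorem was available, and your fallback (degenerating to flag curves and realizing enough transpositions by loops around boundary divisors) is precisely the hard, unproved content, which you acknowledge but do not carry out. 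There is also a secondary slip: irreducibility of $\M_{21}$ does not by itself force every component of $\gsix$ to dominate $\M_{21}$; a priori there could be components of dimension $3g-3$ sitting over proper subvarieties where the fibers of $\eta$ jump in dimension, so that claim needs an argument too.

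The paper sidesteps monodromy entirely: by residuation, $L\in W^6_{24}(C)$ if and only if $K_C\tensor\dual{L}\in W^2_{16}(C)$, so $\gsix$ receives a dominant rational map from the Severi variety $V_{16,21}$ of irreducible plane curves of degree $16$ and genus $21$, which is irreducible by Harris's theorem on the Severi problem; hence $\gsix$ is irreducible. If you want to salvage your approach you would need either a proof of transitivity of the monodromy for these specific numerics (essentially redoing a substantial degeneration argument) or a citation that actually covers $r\ge2$, and neither is supplied; as written the proposal has a genuine gap at its central step.
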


\begin{proof}
  Note that a $\g{6}{24}$ is residual to a $\g{2}{16}$; that is
  \begin{equation*}
    L\in W^6_{24}(C) \iff K_C\tensor L^* \in W^2_{16}(C)
  \end{equation*}
  for any smooth curve $C$ of genus 21.
  Thus there is a dominant rational map
  \begin{equation*}
    V_{16,21} \longrightarrow \gsix
  \end{equation*}
  from the Severi variety of irreducible plane curves of degree 16 and
  genus 21. Since $V_{16,21}$ is irreducible \cite{SeveriProb} and
  maps dominantly to $\gsix$, so $\gsix$ is irreducible.
\end{proof}

\begin{prop}
  \label{prop:E-tilde-divisor}
  The substack $\tE$ of $\gsix(\M_{21})$ defined in
  Section~\ref{sec:syzygy-divisors} has codimension 1.
\end{prop}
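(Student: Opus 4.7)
The plan is to exhibit $\tE$ as the degeneracy locus of a map of vector bundles of equal rank on a dense open of $\gsix(\M_{21})$, and then produce a single point of that open at which the map is an isomorphism.

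First I would observe that for any $\g{6}{24}$ $(C,L,V)$ on a smooth genus-$21$ curve one has $\deg L^{\tensor 2}=48>2g-2=40$, so $L^{\tensor 2}$ is non-special and $h^0(L^{\tensor 2})=28$. By cohomology and base change, $\pls \sL^{\tensor 2}$ is therefore locally free of rank $28$ on the open substack $U\subset\gsix(\M_{21})$ lying over $\M_{21}$, while $\Sym^2\sV$ has rank $\binom{8}{2}=28$ as well. The multiplication map
\begin{equation*}
\varphi\colon \Sym^2\sV \longrightarrow \pls\sL^{\tensor 2}
\end{equation*}
is then a morphism between vector bundles of the same rank on $U$, and $\tE\cap U$ is cut out by the single equation $\det\varphi=0$. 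Hence $\tE\cap U$ is either all of $U$ or is pure of codimension $1$.

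By Lemma~\ref{lem:irred}, $\gsix(\M_{21})$ is irreducible, so $U$ is dense in $\gsix(\M_{21})$ and the claim reduces to producing a single $(C,L,V)\in U$ at which $\varphi(C,L,V)$ is an isomorphism---equivalently, one $\g{6}{24}$ whose image in $\Proj^6$ does not lie on any quadric. Via the residuation $L\mapsto K_C\tensor\dual{L}$ used in the proof of Lemma~\ref{lem:irred}, this is in turn equivalent to exhibiting an irreducible nodal plane curve of degree $16$ and geometric genus $21$ whose residual series has the property.

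To construct such an example I would specialize $C$ to a smooth curve on a polarized K3 surface $(S,H)$ of genus $21$ whose Picard lattice contains an auxiliary class restricting to the desired $L$; the injectivity of $\Sym^2 V\to\coh{0}{L^{\tensor 2}}$ can then be reduced, in the standard K3 fashion, to the vanishing of an $H^1$ on $S$ read off from the Koszul complex of $|L|$. The main obstacle will be tuning the Picard configuration on $S$ so that the required restriction hypotheses and the needed cohomology vanishings simultaneously hold; once such an example is in hand, $\det\varphi$ is a non-zero section of a line bundle on $U$, and $\tE$ is precisely the divisor of that section.
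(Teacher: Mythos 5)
Your reduction is sound and is the same one the paper makes: since $\Sym^2\sV$ and $\pls\sL^{\tensor 2}$ both have rank $28$ over the locus of smooth curves, $\tE$ is cut out there by the vanishing of a determinant, so by the irreducibility of $\gsix(\M_{21})$ (Lemma~\ref{lem:irred}) everything comes down to exhibiting a single smooth curve with a $\g{6}{24}$ whose image in $\Proj^6$ lies on no quadric. But that exhibition is precisely where your proposal stops: you say you \emph{would} specialize to a curve on a polarized K3 surface and \emph{would} tune the Picard lattice so that the relevant restriction maps and cohomology vanishings hold, and you explicitly flag this tuning as the ``main obstacle.'' Since the entire content of the proposition is the existence of that one example, this is a genuine gap, not a routine verification left to the reader.

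Moreover, the K3 route you sketch is a questionable place to look for the example. Curves on K3 surfaces are exactly the curves one expects to be \emph{special} for this kind of quadric condition: in the case $i=0$, $s=1$ of the same syzygy construction (genus $10$, $\g{4}{12}$'s), the divisor in question is essentially the K3 locus of Cukierman and Farkas--Popa, i.e.\ K3 sections land \emph{inside} the degeneracy locus rather than outside it. So you would need to argue carefully that your auxiliary class $M$ on the K3 restricts to a genuine $\g{6}{24}$ and that $\Sym^2\coh{0}{M|_C}\to\coh{0}{M^{\tensor 2}|_C}$ is injective, and Hodge-index constraints ($M^2\le 14$ when $C^2=40$, $M\cdot C=24$) leave little room; none of this is verified. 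The paper avoids the issue by an explicit rational-surface construction: it takes $S=\Bl_{21}\Proj^2$ embedded in $\Proj^6$ by $\lvert 6H-\sum E_i\rvert$ as the rank-two locus of a general $3\times 6$ matrix of linear forms, so that the ideal of $S$ is generated by cubics and hence no curve on $S$ lies on a quadric, and then produces a smooth irreducible genus-$21$, degree-$24$ curve in $\lvert 13H-2\sum_{j=1}^{9}E_j-3\sum_{k=10}^{21}E_k\rvert$, with smoothness and irreducibility certified by a Macaulay~2 computation over a finite field together with a flatness/semicontinuity argument (Proposition~\ref{prop:macaulay}). To complete your proof you would either need to carry out the K3 lattice and vanishing analysis in full, or replace it with an explicit construction of this kind.
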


\begin{proof}
%  We claim that $\gsix$ is irreducible.
%   \begin{equation*}
%     \gsixp(\m_{21}) = \gsix(\m_{21})
%   \end{equation*}
  Since $\gsix(\m_{21})$ is irreducible, it suffices to exhibit a
  smooth curve with a $\g{6}{24}$ not lying on a quadric. Let
  \begin{equation*}
    S = \Bl_{21}\Proj^2
  \end{equation*}
  be the blow-up of $\Proj^2$ at 21 general points, and consider the
  linear system
  \begin{equation*}
    \nu = \Bigl\lvert 13H - 2\sum_{j=1}^9E_j - 3\sum_{k=10}^{21}E_k \Bigr\rvert
  \end{equation*}
  on $S$, where $H$ is the hyperplane class and $E_i$ are the
  exceptional divisors.  A calculation using Macaulay 2 (see
  Proposition~\ref{prop:macaulay}) shows that a general member $C$ of
  $\nu$ is irreducible and smooth of genus 21. The series
  \begin{equation*}
    \Bigl\lvert 6H - \sum_{i=1}^{21}E_i\Bigr\rvert
  \end{equation*}
  embeds $S$ in $\Proj^6$ as the rank-2 locus of general $3\times6$
  matrix of linear forms \cite[Section 20.4]{Eisenbud.comm.alg}.
  The ideal of $S$ is therefore generated by cubics, so $S$ does not
  lie on quadric. It follows that $C$, which
  embeds in $\Proj^6$ in degree 24,
  does not lie on a quadric.
 
  % To prove irreducibility of $\gsix$, 
  %\note{fix the irreducible statement}
  % $\sGp{2}{16}(\m_{21})$, so $\sGp{2}{16}$ is
%   irreducible. The open subset
%   \begin{equation*}
%     U = \{(C,L,V) \mid V=\coh{0}{L} \} \subset \gsixp(\m_{21})
%   \end{equation*}
%   is dense and isomorphic to an open set in $\sG{2}{16}$, which shows
%   that $\gsixp$ is irreducible as well.
\end{proof}

\begin{prop}
  \label{prop:macaulay}
  Let $\Sigma$ be a set of 21 general points in $\Proj^2$ and let $S =
  \Bl_{\Sigma}\Proj^2$ be the blow-up of $\Proj^2$ at $\Sigma$. If $H$
  is the line class on $S$ and $E_1,\dotsc,E_{21}$ are the exceptional
  divisors, then the linear system
  \begin{equation*}
     \Bigl\lvert 13H - 2\sum_{j=1}^9E_j - 3\sum_{k=10}^{21}E_k \Bigr\rvert
  \end{equation*}
  on $S$ contains a smooth connected curve.
\end{prop}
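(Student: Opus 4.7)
The plan is to reduce the proposition to a single explicit computer-algebra verification on a particular choice $\Sigma_0$ of 21 points, and then upgrade the result to generic configurations by openness. I would fix $\Sigma_0 = \Sigma_0'\sqcup\Sigma_0''\subset\Proj^2$ with $\lvert\Sigma_0'\rvert = 9$ and $\lvert\Sigma_0''\rvert=12$, defined over $\Q$ or a small finite field, and use Macaulay 2 to compute the degree-13 graded piece $I_{13}$ of the ideal $I = \mathcal{I}_{\Sigma_0'}^{\,2}\cap\mathcal{I}_{\Sigma_0''}^{\,3}\subset\Q[x_0,x_1,x_2]$. This piece is canonically identified with $\coh{0}{S,\oh{S}(D)}$, where $D = 13H - 2\sum_{p\in\Sigma_0'}E_p - 3\sum_{p\in\Sigma_0''}E_p$ on $S = \Bl_{\Sigma_0}\Proj^2$. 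A naive count of conditions gives expected dimension $\binom{15}{2} - 9\cdot 3 - 12\cdot 6 = 6$, and I would verify that this dimension is actually achieved for the chosen $\Sigma_0$.

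Next, I would pick a random element $F \in I_{13}$, set $C_0 = V(F)\subset\Proj^2$, and let $\widetilde{C}_0\subset S$ be its proper transform. Two checks, both directly implementable in Macaulay 2, would then establish the proposition for $\Sigma_0$: (i) $C_0$ is irreducible, confirmed by a minimal-primes computation; (ii) $\widetilde{C}_0$ is smooth, equivalently $C_0$ has an ordinary $m$-fold point at each base point (with $m=2$ or $3$) and is smooth elsewhere. Condition (ii) reduces to verifying that the degree-$m$ part of the Taylor expansion of $F$ at each base point factors as a product of $m$ pairwise distinct linear forms, and that the Jacobian ideal of $F$ saturated by the ideal of $\Sigma_0$ is the unit ideal. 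Granting (i) and (ii), $\widetilde{C}_0$ is smooth and connected of genus $\binom{12}{2} - 9\binom{2}{2} - 12\binom{3}{2} = 21$, consistent with the adjunction calculation $2g-2 = D^2 + K_S\cdot D = 25 + 15$.

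Finally, to pass from $\Sigma_0$ to a general configuration, consider the incidence scheme of pairs $(\Sigma, C)$ with $\Sigma$ in the open locus $U\subset(\Proj^2)^{21}$ of 21-tuples of distinct points and $C$ a curve in the corresponding linear system on $\Bl_\Sigma\Proj^2$. The condition that $C$ is irreducible with smooth proper transform defines a Zariski open subset of this incidence scheme, which is nonempty by the explicit verification above. Projecting to $U$ yields the proposition for all $\Sigma$ in some nonempty open subset of $U$---that is, for general $\Sigma$. The principal obstacle is step two: a random $F$ might have extra singularities at or away from $\Sigma_0$, so one may need to sample several $F$, relying on the expectation that for sufficiently generic $\Sigma_0$ the general $F\in I_{13}$ has only the prescribed multiple points. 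Once a single successful example is exhibited, the remaining steps are formal.
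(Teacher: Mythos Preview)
Your proposal is correct and follows essentially the same approach as the paper: reduce to a single explicit Macaulay~2 verification (random $\Sigma_0$, random $F$, check irreducibility and that the only singularities are ordinary multiple points at $\Sigma_0$), then invoke openness in an incidence scheme to pass to general $\Sigma$. The only cosmetic difference is that the paper certifies smoothness of the proper transform by computing the total degree of the singular locus of $C_0$ (which must equal $9\cdot 1 + 12\cdot 4 = 57$ exactly when every base point is an ordinary $m$-fold point and there are no other singularities), whereas you propose checking the tangent cones at base points and saturating the Jacobian ideal away from $\Sigma_0$; both are valid and equivalent checks.
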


\begin{proof}
  We begin by showing that it is enough to exhibit a single set of 21
  points over a finite field for which the above statement is true.

  Let $\hilb{k}$ be the Hilbert scheme of $k$ points in $\Proj^2$, and
  let
  \begin{equation*}
    \Sigma_k \subset \hilb{k}\times\Proj^2
  \end{equation*}
  be the universal
  subscheme. Let
  \begin{equation*}
    B\subset\hilb{9}\times\hilb{12}
  \end{equation*}
  be the irreducible
  open subset over which the composition
  \begin{equation*}
    \begin{diagram}
      \Sigma = \pi_1^{-1}\Sigma_9 \union \pi_2^{-1}\Sigma_{12}
    &\rTo& \hilb{9}\times\hilb{12} \times \Proj^2 \\
    & &\dTo \\
     & & \hilb{9}\times\hilb{12} 
    \end{diagram}
  \end{equation*}
  is \'etale, where $\pi_1$ and $\pi_2$ are the obvious projections. Let
  \begin{equation*}
    \pi\colon S = \Bl_\Sigma\Proj^2_B \to B
  \end{equation*}
  be the smooth surface over B whose fibers are blow-ups of $\Proj^2$
  at 21 distinct points. If $E_9$ and $E_{12}$ are the exceptional
  divisors, let
  \begin{equation*}
    \sL = \oh{S}{13H - 2E_9 -3E_{12}}\text{.}
  \end{equation*}
  We may further restrict $B$ to an open over which $\pi_*\sL$ is
  locally free of rank at least 6.

  If
  \begin{equation*}
    \sC \subset \Proj\pi_*\sL \times_B S
  \end{equation*}
  is the universal section, then the projection
  \begin{equation*}
    \sC \to \Proj\pi_*\sL
  \end{equation*}
  is flat, so it suffices to find a single smooth fiber in order to
  conclude that the general fiber is smooth. To this end we use
  Macaulay 2 \cite{Macaulay2} and work over a finite field.

%   It suffices to find a single collection $\Sigma$ of 21 points for
%   which the result holds. In fact, it enough to do this over a single
%   finite field $\F_p$. To this end we use Macaulay.
\begin{verbatim}

i1 : S = ZZ/137[x,y,z];

\end{verbatim}
  Following Shreyer and Tonoli \cite{Schreyer.Tonoli}, we realize our
  points in $\Proj^2$ as a determinental subscheme.
\begin{verbatim}

i2 : randomPlanePoints = (delta,R) -> (
       k:=ceiling((-3+sqrt(9.0+8*delta))/2);
       eps:=delta-binomial(k+1,2);
       if k-2*eps>=0
       then minors(k-eps,
         random(R^(k+1-eps),R^{k-2*eps:-1,eps:-2}))
       else minors(eps,
         random(R^{k+1-eps:0,2*eps-k:-1},R^{eps:-2})));

i3 : distinctPoints = (J) -> (
       singJ = minors(2, jacobian J) + J;
       codim singJ == 3);

\end{verbatim}
  Let $\Sigma_9$ and $\Sigma_{12}$ be our subsets of 9 and 12 points,
  respectively.
\begin{verbatim}

i4 : Sigma9 = randomPlanePoints(9,S);

o4 : Ideal of S

i5 : Sigma12 = randomPlanePoints(12,S);

o5 : Ideal of S

i6 : (distinctPoints Sigma9, distinctPoints Sigma12)

o6 = (true, true)

o6 : Sequence

\end{verbatim}
  Their union is $\Sigma$.
\begin{verbatim}

i7 : Sigma = intersect(Sigma9, Sigma12);

o7 : Ideal of S

i8 : degree Sigma

o8 = 21

\end{verbatim}
  Next we construct the 0-dimensional subscheme $\Gamma$ whose ideal
  consists of curves double through points of $\Sigma_9$ and triple
  through points of $\Sigma_{12}$.
\begin{verbatim}

i9 : Gamma = saturate intersect(Sigma9^2, Sigma12^3);

o9 : Ideal of S

\end{verbatim}
  Let us check that $\Gamma$ imposes the expected number of conditions
  ($9\cdot 3+12\cdot 6=99$) on curves of degree 13.
\begin{verbatim}

i10 : hilbertFunction (13, Gamma)

o10 = 99

\end{verbatim}
  Pick a random curve $C$ of degree 13 in the ideal of $\Gamma$.
\begin{verbatim}

i11 : C = ideal (gens Gamma 
                  * random(source gens Gamma, S^{-13}));

o11 : Ideal of S

\end{verbatim}
  We check that $C$ is irreducible.
\begin{verbatim}

i12 : # decompose C

o12 = 1

\end{verbatim}
  \newcommand{\Csing}{C_{\mathrm{sing}}}%
  To check smoothness, let $\Csing$ be the singular locus of $C$.
\begin{verbatim}

i13 : Csing = (ideal jacobian C) + C;

o13 : Ideal of S

i14 : codim Csing

o14 = 2

\end{verbatim}

  A double point will contribute 1 to the degree of $\Csing$ if it is
  transverse and more otherwise. Similarly, a triple point will
  contribute 4 to the degree of $\Csing$ if it is transverse and more
  otherwise. So for $C$ to be smooth in the blow-up, we must have that
  \begin{equation*}
    \deg\Csing = 9 + 4\cdot 12 = 57
  \end{equation*}
\begin{verbatim}

i15 : degree Csing

o15 = 57

\end{verbatim}
\end{proof}

\begin{defn}
  \label{def:E}
  Let $E$ be the effective codimension-1 Chow cycle which is the image
  of $\tE$ under the map
  \begin{equation*}
    \eta\colon \gsixm \to \mirr
  \end{equation*}
\end{defn}

\begin{prop}
  \label{prop:class-E}
  The class of $E\subset \mirr$ is given as
  \begin{equation*}
    [E] = 2459\lam - 377\dn \text{.}
  \end{equation*}
\end{prop}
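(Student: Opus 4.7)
My plan is to combine the class of $\tE$ in $A^1(\gsixm)$ derived in Section~\ref{sec:syzygy-divisors} with the push-forward formulas of Theorem~\ref{main-thm}, and then restrict the answer from $\overline{\M}_{21,1}$ to $\mirr$.

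Specializing the formula at the end of Section~\ref{sec:syzygy-divisors} to $i=0$, $r=6$, $s=2$ (so $g=21$, $d=24$, and $\rho=0$), the binomial coefficients with negative arguments drop out and the class of the syzygy locus becomes
\begin{equation*}
  [\tE] = 2\ac - \bc - 8\cc + \eta^*\lam \in A^1(\gsixm)\text{.}
\end{equation*}
I would then apply $\pf$ term by term, using Theorem~\ref{main-thm} to evaluate each of $\pf\ac$, $\pf\bc$, $\pf\cc$ at these parameter values, together with the elementary identity $\pf\eta^*\lam = N\lam$, where $N$ is the Castelnuovo number in Theorem~\ref{main-thm}. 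Since $\mirr$ is the complement in $\overline{\M}_{21}$ of $\de_1,\dotsc,\de_{g-1}$, restriction to $\mirr$ kills each of those boundary classes, leaving only the $\lam$, $\dn$, and $\psi$ terms to collect.

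The one conceptual step, beyond bookkeeping, is the vanishing of the $\psi$ coefficient in $2\pf\ac - \pf\bc - 8\pf\cc + N\lam$. It must vanish because $\tE$ is defined without a marked point, so $\pf[\tE]$ is pulled back from $\mirr$ and so cannot carry a $\psi$ term. Extracting the $\psi$ coefficients from Theorem~\ref{main-thm}, namely $-d^2N/(g-1)$ in $\pf\ac$, $-dN$ in $\pf\bc$, and $-d(r+1)N/(2(g-1))$ in $\pf\cc$, the required cancellation simplifies to the numerical identity $g - 2d + 4r + 3 = 0$, which for $(g,r,d)=(21,6,24)$ reads $21-48+24+3 = 0$.

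Once this consistency check is verified, the remaining step is simply to collect the $\lam$ and $\dn$ coefficients. A direct computation gives $\pf[\tE]$ proportional to $2459\lam - 377\dn$ in $A^1(\mirr)$, and the stated formula $[E]=2459\lam - 377\dn$ then follows after dividing by the degree of the generically finite map $\tE \to E$. The main obstacle is not conceptual but arithmetic: keeping track of the rational coefficients through the linear combination; the $\psi$ cancellation is the only place where genuine content enters.
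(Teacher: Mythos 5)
Your proposal is correct and follows essentially the same route as the paper: specialize the syzygy-divisor class of Section~\ref{sec:syzygy-divisors} to $(i,r,s)=(0,6,2)$ to get $[\tE]=2\ac-\bc-8\cc+\eta^*\lam$, push forward via Theorem~\ref{main-thm}, and restrict to $\mirr$; the arithmetic indeed yields $\frac{2459N}{95}\lam-\frac{377N}{95}\dn$, i.e.\ the stated class up to a positive scalar, which is all the slope corollary needs. Your verification that the $\psi$-coefficient cancels (via $g-2d+4r+3=0$) is a consistency check the paper leaves implicit, but it is not a different method.
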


\begin{proof}
  %The class of $E$ is proportional to $\pf[\tE]$.
  Applying Equation~(\ref{eq:syzygy.slope}) from
  Section~\ref{sec:syzygy-divisors}, 
  % By Proposition
  % \ref{prop:class-E-tilde} and Theorem \ref{main-thm},
  \begin{equation*}
    \begin{split}
      [E] = \pf[\tE] &= \pf(2\ac - \bc + \lam  - 8\cc) \\
      &= \frac{2459N}{95}\lam - \frac{377N}{95}\dn \text{,}
    \end{split}
  \end{equation*}
  where $N$ is the degree of $\eta$.
\end{proof}

\begin{cor}
  The slope conjecture is false in genus 21.
\end{cor}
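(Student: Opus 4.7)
The plan is to combine Proposition~\ref{prop:class-E} with the statement of the Harris-Morrison slope conjecture and verify a simple numerical inequality. Recall that the slope conjecture of Harris-Morrison \cite{HarMor} predicts that every effective divisor on $\Mgb$ has slope at least
\begin{equation*}
  s(g) = 6 + \frac{12}{g+1} \text{.}
\end{equation*}
For $g = 21$ this gives $s(21) = 6 + \tfrac{12}{22} = \tfrac{72}{11}$.

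First I would observe that by Proposition~\ref{prop:E-tilde-divisor} the locus $\tE$ really is a divisor in $\gsix(\M_{21})$, and so its image $E$ under the generically finite map $\eta$ is an effective divisor in $\mirr$. By Proposition~\ref{prop:class-E} the class of this divisor in $\mirr$ is
\begin{equation*}
  [E] = 2459\lam - 377\dn\text{,}
\end{equation*}
so its slope is $2459/377$. It remains to check that $2459/377 < 72/11$. Clearing denominators, this is equivalent to the inequality $11 \cdot 2459 < 72 \cdot 377$, i.e.\ $27049 < 27144$, which is immediate.

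The only subtlety is that $[E]$ has been computed in $\mirr$ rather than all of $\Mgb$, so strictly speaking one needs the closure $\overline{E}$ of $E$ in $\Mgb$ to be an effective divisor with the same $\lam$ and $\dn$ coefficients; this is automatic since $\mirr$ differs from $\Mgb$ only in the higher boundary divisors $\de_i$ for $i\ge 1$, and the coefficients of those in $[\overline{E}]$ can only affect the slope through possible negative coefficients, but by the standard argument (see e.g.\ \cite{HarMor}) the slope of $\overline{E}$ is bounded above by $2459/377$. Thus $\overline{E}$ is an effective divisor on $\Mb_{21}$ whose slope is strictly less than the Harris-Morrison bound, contradicting the slope conjecture in genus $21$.
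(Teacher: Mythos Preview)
Your approach matches the paper's: both invoke Proposition~\ref{prop:class-E} to get $[E]=2459\lam-377\dn$, verify the inequality $2459/377 < 6+\tfrac{12}{22}$, and conclude. The paper compresses the last step into a citation of \cite[Corollary~1.2]{FarkasPopa}, whereas you spell out the arithmetic and then try to handle the passage from $\mirr$ to $\Mgb$ by hand.

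That last paragraph is where your argument is looser than the paper's. Your assertion that ``the slope of $\overline{E}$ is bounded above by $2459/377$'' does not follow from a standard argument in \cite{HarMor}: knowing only the $\lam$ and $\dn$ coefficients, one cannot bound $a/\min_i b_i$ without further input, and the coefficients of $\de_i$ for $i\ge1$ could in principle be smaller than $377$. What actually makes the deduction work is precisely the content of \cite[Corollary~1.2]{FarkasPopa} (a moving-curve argument showing that $a/b_0$ alone suffices to violate the conjecture), or alternatively a direct computation of all the $\de_i$ coefficients via Theorem~\ref{main-thm}. So the paper's citation is carrying real weight that your sketch hand-waves past; replacing your appeal to \cite{HarMor} with the Farkas--Popa reference would close the gap.
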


\begin{proof}
  Since
  \begin{equation*}
    \frac{2459}{377} < 6 + \frac{12}{22},
  \end{equation*}
  this is an immediate consequence of \cite[Corollary 1.2]{FarkasPopa}.
\end{proof}

\bibliographystyle{hamsplain}
\bibliography{math}

\end{document}